\renewcommand{\epsilon}{\varepsilon}
\newcommand{\R}{\mathbb{R}}
\newcommand{\N}{\mathbb{N}}
\newcommand{\ds}{\displaystyle}
\newcommand{\id}{\bold{1}}
\newcommand{\Lp}{L^{p(\cdot)}}
\newcommand{\Lq}{L^{q(\cdot)}}
\def\ov{\overset}
\def\un{\underset}
\DeclareMathOperator*{\esssup}{ess\,sup}
\newtheorem{theorem}{Theorem}[section]
\newtheorem*{theorem*}{Theorem}
\newtheorem{lemma}[theorem]{Lemma}
\newtheorem{cor}[theorem]{Corollary}
\newtheorem*{cor*}{Corollary}
\newtheorem{ex}[theorem]{Example}
\newtheorem{defi}[theorem]{Definition}
\newtheorem{prop}[theorem]{Proposition}
\newtheorem{remark}[theorem]{Remark}
\begin{document}

\author{{\'A}ngela Capel}
\email{angela.capel@ma.tum.de}
\address{Department of Mathematics, Technische Universit\"at M\"unchen, 85748 Garching, Germany and Munich Center for Quantum Science and Technology (MCQST), M\"unchen, Germany}

\author{Jes{\'u}s Oc{\'a}riz}
\email{jesus.ocariz@uam.es}
\address{Departamento de Matem{\'a}ticas, Universidad Aut{\'o}noma de Madrid, Spain and Instituto de Ciencias Matem{\'a}ticas (CSIC-UAM-UC3M-UCM),  Madrid, Spain }

\title[Approximation with Neural Networks in Variable Lebesgue Spaces]{Approximation with Neural Networks \\in Variable Lebesgue Spaces}

\begin{abstract}
This paper concerns the universal approximation property with neural networks in variable Lebesgue spaces. We show that, whenever the exponent function of the space is bounded, every function can be approximated with shallow neural networks with any desired accuracy. This result subsequently leads to determine the universality of the approximation depending  on the boundedness of the exponent function. Furthermore,  whenever the exponent is unbounded, we obtain some characterization results for the subspace of functions that can be approximated.
\end{abstract}

\date{\today}

\maketitle

\vspace{-0.3cm}

\tableofcontents

\section{Introduction}\label{sec:intro}

Artificial neural networks are a model created with the purpose of imitating the behavior of  biological neural networks using digital computing. Their origins are tied back to \cite{mcculloch1943origin} and \cite{rosenblatt1958perceptron}, and since then numerous applications have been found  in a wide range of fields, varying from machine learning to computer vision, speech recognition or mathematical finances, among many others. A major problem in the theory of neural networks is that of approximating within a desired accuracy a generic class of functions using neural networks, initially motivated by the behavior for neural networks observed in the Representation Theorem due to Arnold  \cite{arnold1957threevariables} and Kolmogorov \cite{kolmogorov1957representation} and the aim of providing a theoretical justification for it.

The starting point of approximation theory for neural networks was the \textit{Universal Approximation Theorem} of \cite{cybenko1989approximation} and \cite{hornik1989multilayer}, which shows that every continuous function on a compact can be uniformly approximated by shallow neural networks with a continuous, non-polynomial activation function. Subsequent extensions of this result addressed the analogous problem for Lebesgue spaces with a finite exponent \cite{hornik1991approximation} and locally integrable spaces \cite{park1991universal}, meanwhile some others also considered the derivatives of the neural networks to show that shallow neural networks with a sufficiently smooth activation function and unrestricted width are dense in the space of sufficiently differentiable functions \cite{pinkus1999MLPmodel}. Soon after this wave of results for shallow neural networks, various surveys on the topic appeared in the literature, such as \cite{pinkus1997ridge}, \cite{scarselli1998survey}, \cite{tikk2003survey} and \cite{sanguineti2008survey}.

In the last years, many directions have been explored in the approximation theory for both shallow and deep neural networks. For neural networks with ReLU activation functions, there are a number of recent papers concerning various topics: Approximation for Besov spaces  \cite{suzuki2019besov},  regression \cite{schmidt2017regression} and optimization \cite{giulini2009optimization} problems, restriction to encodable weights \cite{petersen2018deepReLU},  negative results of approximation  \cite{almira2018negative}, estimates for the errors obtained in the approximation  \cite{petrushev1999ridge}, \cite{yarotsky2017errorRELU}, or,  in general,  deep neural networks  \cite{kidger2019UAdeep}, \cite{shaman2018deep}, among many others.  A simpler and inspiring new proof for the universality theory in deep neural networks can be found in \cite{heinecke2020ReLU}. Moreover, in \cite{hwang2020unrectifying} the authors develop a new technique named ``un-rectifying'' which transfers piece-wise continuous non-linear activation functions into piece-wise continuous linear functions and then use it to show that ReLU networks and MaxLU networks are indeed deep trees.

Furthermore, for more regular activation functions there are also numerous recent articles, namely \cite{barron1994approx}, \cite{bolcskei2019sparselydeep}, \cite{lin2019deepnets}, \cite{li2020highdimensions}, \cite{mhaskar1996NNoptimalapprox},  \cite{ohn2019smoothapprox}, \cite{tang2019chebyshev}, most of which focus on deep neural networks. Some other directions are currently being studied for the  problem of approximation too, such as the topological approach presented in \cite{kratsios2019approximation}, the application of these results to finding solutions of partial differential equations \cite{guhring2020encodableweights} or the comparison to approximation with tensor networks \cite{ali2020tensornetworks}.

In  this paper, we take a step forward in the theory of approximation with neural networks and address the \textbf{problem of approximating any function in a variable Lebesgue space} with enough precision using  shallow neural networks with various activation functions. Variable Lebesgue spaces are in particular locally integrable spaces. Even though there exist some previous results of universal approximation in locally integrable spaces, they depend on a metric defined in a way that the behavior of a function away from a compact is always negligible. Moreover, since that distance is not constructed from a norm, the approximations are not stable by dilations. In this manuscript, we overcome this situation and prove our approximation results employing the distance determined by the  usual norm of the space. 

Variable Lebesgue spaces are a generalization of Lebesgue spaces that might contain functions which do not belong to any Lebesgue space \cite{cruzuribe2013variablelp}. In particular, a variable Lebesgue space might include all the bounded functions even if the domain is not compact. Therefore, a natural problem that arises in this setting is that of approximating functions in non-compact domains. For example, continuous functions defined on  an unbounded interval with a suitable asymptotic limit. This type of functions may appear as the representation of a quantity  that follows a diffusion process with time (like the temperature at a certain point in a closed system).

More specifically, in this paper we show approximation results with neural networks for variable Lebesgue spaces depending on the boundedness of their exponent function. If the exponent of the space is essentially bounded, we show that a result of universal approximation holds, yielding thus an analogous behavior to that of usual Lebesgue spaces. On the other hand, if the exponent is unbounded, the situation is much more subtle, but we can characterize in some cases the subspace of functions which can be approximated with neural networks relying on some results of  \cite{amenta2019variablelp}. We first address the simpler case of variable sequence spaces to subsequently lift our results to a more general domain. Our results hold for most of the activation functions present in the literature, namely  any sigmoidal function (logistic sigmoid, hyperbolic tangent, Heaviside function, etc) or the rectifier function. 

The outline of the manuscript is the following: In Section \ref{sec:mainresults}, we present an informal exposition of the main results of the present article. Important notions and results on neural networks and variable Lebesgue spaces are reviewed in Section \ref{sec:prelim}. In Section \ref{sec:UA}, we collect some of the previous results on universal approximation in certain function spaces and provide several improvements or generalizations for them. Finally, in Section \ref{sec:approxvarLp}, we present our results on approximation with neural networks in variable Lebesgue spaces.

\section{Main results}\label{sec:mainresults}

A \textit{shallow neural network} is described by a function $g:\R^d\to \R$ given by
\begin{equation}\label{eq:neuralnetwork}
g(x)= \underset{j=1}{\overset{M}{\sum}} \alpha_j \, \sigma (w_j \cdot x + b_j) \,  ,
\end{equation}
where  $x \in \mathbb{R}^d$ represents the \emph{input} to the neural network, $g(x)\in \mathbb{R}$ the \emph{output}, $w_j \in \mathbb{R}^d$ and $\alpha_j\in \R$ are the \emph{weights} between first and second layer, and second and third layer, respectively, $b_j \in \mathbb{R}$ are the \emph{biases}, $\sigma: \mathbb{R} \rightarrow \mathbb{R}$ is the \emph{activation function} and $M$ the \textit{height}. The subspace generated by such functions will be denoted by $H_\sigma$.

Given an activation function $\sigma$ and a function normed space $(X,\|\cdot\|)$, the \textit{Universal Approximation (UA)} property for shallow neural networks can be formally stated as follows: 

\vspace{0.2cm}
\begin{center}
$\boxed{\text{ For every $ f\in X$ and $\forall \varepsilon >0$, there is a function $g \in H_\sigma$  such that $\|f-g\|<\varepsilon$. }}$
\end{center}

\vspace{0.2cm} 

The main results of this article concern the UA property for variable Lebesgue spaces, i.e. spaces of the form
$$
\Lp(\Omega):=\{f:\Omega\to \R: f \text{ measurable and } \| f\|_{p(\cdot)}<+\infty\} \, ,
$$
for an open $\Omega \subseteq \mathbb{R}^d$, where $p:\Omega\to [1,+\infty)$ is an exponent function and the norm is given by 
$$
\| f\|_{p(\cdot)} := \inf \left\{\lambda>0: \int_{\Omega}\left(\frac{\abs{f(x)}}{\lambda}\right)^{p(x)} \mathrm{d}x\leq 1  \right\} \, .
$$

The first of these results (which appears in the main text as Theorem \ref{thm:UALpVariable}) shows that  universal approximation  holds whenever the exponent function of the space is bounded.

\begin{theorem*}
Let $\Omega\subseteq \R^d$, consider $p: \Omega \rightarrow [1, + \infty)$ a bounded exponent function and $\sigma: \mathbb{R} \rightarrow \mathbb{R}$  discriminatory for $\Lp(K)$ for every compact $K\subset \Omega$. Then, truncated finite sums of the form
$$
g(x)=\begin{cases}
\displaystyle \sum_{j=1}^M \alpha_j \, \sigma(w_j\cdot x+b_j) & x\in K \, , \\
0 & x\in \Omega \setminus K \, ,
\end{cases}
$$
 with $K$ compact  are dense in $\Lp(\Omega)$.
\end{theorem*}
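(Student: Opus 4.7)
The plan is to reduce the approximation problem on $\Omega$ to one on a compact subset $K$ through a truncation argument exploiting the boundedness of $p$, and then finish by invoking the discriminatory hypothesis on $\sigma$. Concretely, given $f\in \Lp(\Omega)$ and $\varepsilon>0$, I would first find a compact $K\subset \Omega$ with $\|f - f\chi_K\|_{p(\cdot)} < \varepsilon/2$, then approximate $f|_K$ in $\Lp(K)$ by a sum $g_K(x) = \sum_{j=1}^M \alpha_j\,\sigma(w_j\cdot x + b_j)$ within $\varepsilon/2$, and extend $g_K$ by zero outside $K$ to obtain the candidate $g$. A triangle inequality then yields $\|f - g\|_{p(\cdot)} < \varepsilon$, which is the required conclusion.

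For the truncation step, which is where the hypothesis $p\le p_+<\infty$ is essential, I would fix an exhaustion $K_n\nearrow \Omega$ by compact sets (for instance $K_n = \{x\in\Omega : |x|\le n,\; \mathrm{dist}(x,\partial\Omega)\ge 1/n\}$). Boundedness of $p$ together with $\|f\|_{p(\cdot)}<\infty$ forces the modular $\rho(f) := \int_\Omega |f(x)|^{p(x)}\,\mathrm{d}x$ to be finite; since $|f\chi_{\Omega\setminus K_n}|^{p(x)}\to 0$ pointwise a.e.\ and is dominated by $|f|^{p(x)}$, dominated convergence gives $\rho(f\chi_{\Omega\setminus K_n})\to 0$. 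For bounded exponents, modular convergence implies norm convergence through the standard bound $\|h\|_{p(\cdot)}\le \max\bigl(\rho(h)^{1/p_-},\rho(h)^{1/p_+}\bigr)$ valid when $\rho(h)\le 1$, so one selects $n$ with $\|f - f\chi_{K_n}\|_{p(\cdot)}<\varepsilon/2$ and sets $K := K_n$. For the approximation step on $K$, the discriminatory hypothesis --- interpreted in the Cybenko sense, using that the duality $\Lp(K)^*\cong L^{p'(\cdot)}(K)$ holds precisely because $p$ is bounded on $K$ --- forces, via Hahn--Banach, the density in $\Lp(K)$ of the span of $\{\sigma(w\cdot x + b) : w\in \R^d,\, b\in \R\}$, and this produces the required $g_K$.

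The main obstacle is making the transition from modular convergence to norm convergence rigorous: these two notions are genuinely distinct for variable Lebesgue spaces and coincide precisely when the exponent is essentially bounded, which is the whole reason the theorem restricts to this setting. The hypothesis $p\in L^\infty(\Omega)$ plays a double role here, ensuring both that the modular controls the norm uniformly in the truncation step, and that the duality needed for the Hahn--Banach argument on the compact piece is available. Once these two points are in place, the rest --- the compact exhaustion of $\Omega$ and the Cybenko-style density on $K$ --- should follow the classical template with only minor adaptations to the variable-exponent framework.
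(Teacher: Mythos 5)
Your proposal is correct and follows essentially the same route as the paper's proof: truncate $f$ to a compact $K$ using the density of compactly supported functions in $\Lp(\Omega)$ for bounded exponents, then obtain density of the span of $\{\sigma(w\cdot x+b)\}$ in $\Lp(K)$ via Hahn--Banach together with the duality $\Lp(K)^*\cong \Lq(K)$ and the discriminatory hypothesis. The only difference is cosmetic: you re-derive the truncation step from modular convergence and dominated convergence, whereas the paper simply cites it as Proposition \ref{prop:LpVariableDenseCompact}.
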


The condition imposed on $\sigma$ essentially means that if a functional over $\Lp(K)$ vanishes on $H_\sigma$, then the functional is identically null. Most of the activation functions considered in the literature verify this property, such as any continuous function different to an algebraic polynomial, for example. 

Whenever the exponent function of a variable Lebesgue space is unbounded, the situation is much more complex. Indeed, as a consequence of the separability of the space and the previous theorem, we provide a characterization of universal approximation in terms of the boundedness of the exponent function, which appears in the main text as Corollary \ref{cor:characterization}. 

\begin{cor*}
Let $\Omega\subseteq \R^d$, $p: \Omega \rightarrow [1, + \infty)$ a exponent function and $\sigma: \mathbb{R} \rightarrow \mathbb{R}$  continuous and with finite limit at $+ \infty$ and $- \infty$. Then,  UA holds for $\Lp(\Omega)$ if, and only if, $p$ is bounded.
\end{cor*}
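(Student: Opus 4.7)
The plan is to split the biconditional into two directions. The forward direction ($p$ bounded implies UA) is an immediate application of the preceding theorem: continuity of $\sigma$ with finite limits at $\pm\infty$ rules out polynomials and, by the criteria for being discriminatory on $L^{p(\cdot)}(K)$ developed earlier in the paper, makes $\sigma$ discriminatory on every compact $K\subset\Omega$; the theorem then delivers density of truncated finite sums.

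For the converse I would argue by contradiction using a separability obstruction. The strategy has two ingredients. First, I would show that the truncated neural-network hypothesis space $H_\sigma^{\mathrm{trunc}}$ has separable closure in $L^{p(\cdot)}(\Omega)$. Since $\sigma$ has finite limits at $\pm\infty$ and is continuous, it is bounded on $\R$ and uniformly continuous on bounded sets; for any compact $K\subset\Omega$ the indicator $\chi_K$ has finite $L^{p(\cdot)}$ norm (regardless of whether $p$ is bounded on $K$), so the parametrization
\[
(\alpha_j,w_j,b_j)_{j=1}^M \ \longmapsto\ \sum_{j=1}^M \alpha_j\,\sigma(w_j\cdot x+b_j)\,\chi_K(x)
\]
is continuous from $\R^{(d+2)M}$ into $L^{p(\cdot)}(\Omega)$. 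Exhausting $\Omega$ by the compacts $K_n=\overline{\Omega\cap[-n,n]^d}$ and restricting to rational parameters yields a countable set dense in $H_\sigma^{\mathrm{trunc}}$.

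Second, I would invoke the classical fact (see \cite{cruzuribe2013variablelp}) that $L^{p(\cdot)}(\Omega)$ fails to be separable as soon as $p$ is essentially unbounded. The idea behind this fact, which I would at most sketch, is to choose pairwise disjoint measurable sets $E_n$ with $|E_n|>0$ on which $p$ is essentially at least $n$, normalize $c_n\chi_{E_n}$ to have unit Luxemburg norm, and observe that for any two distinct subsets $S,T\subset\N$ the functions $\sum_{n\in S}c_n\chi_{E_n}$ and $\sum_{n\in T}c_n\chi_{E_n}$ are separated by a uniform positive constant; this produces an uncountable $\epsilon$-separated family.

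Combining the two ingredients, $\overline{H_\sigma^{\mathrm{trunc}}}$ cannot coincide with $L^{p(\cdot)}(\Omega)$, so UA fails. The most delicate step will be the non-separability statement, because the lower bound on $\|f_S-f_T\|_{p(\cdot)}$ must not degenerate as $n\to\infty$; I expect to handle it by the standard modular computation on the $E_n$, exploiting that $(c_n)^{p(x)}$ saturates the defining integral precisely on the regions where $p$ is large. The separability argument in Step~1 is routine, and the final contradiction is automatic once the two facts are in hand.
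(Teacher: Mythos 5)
Your proposal is correct and takes essentially the same route as the paper: the forward direction is Theorem \ref{thm:UALpVariable} combined with the discriminatory criteria (Proposition \ref{prop:SufficientCondDisc} and Lemma \ref{lem:RelationDiscVar}), and the converse is the separability obstruction, i.e.\ the closure of the (truncated) hypothesis class is separable while $\Lp(\Omega)$ is non-separable for unbounded $p$, which the paper handles by citing Lemma \ref{lem:SeparabilityNecces} and Proposition \ref{prop:LpVariableSeparable} rather than spelling out the rational-parameter and disjoint-support arguments as you do.
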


Subsequently, since for unbounded exponent functions the UA fails, we study conditions to describe the subspace of functions which can be approximated with neural networks. In the following result, the condition of characterization is a generalization of the concept of having an asymptotic limit. 

\begin{theorem*}
Let $\Omega \subseteq \mathbb{R}$ be an unbounded interval and $p: \Omega \rightarrow [1, + \infty)$ an unbounded exponent function such that $L^\infty(\Omega)\subset \Lp(\Omega)$ and it is bounded in every compact subset of $\Omega$. Let $\sigma\in L^\infty(\R)$ be a non-constant, sigmoidal activation function. Then, the following conditions are equivalent for $f\in \Lp(\Omega)$:
\begin{enumerate}
\item For every $ \varepsilon>0$, there is a $g_\varepsilon \in H_\sigma$ such that $\|f-g_\varepsilon\|_{\Lp(\Omega)}<\varepsilon$.
\item There is a scalar $\beta \in \R$ such that
$$
\|[f-\beta \id_{\Omega}]\|_Q=0 \, ,
$$
where $\|\cdot\|_Q$ is the quotient norm given in Definition \ref{def:LpQ}.
\end{enumerate}
\end{theorem*}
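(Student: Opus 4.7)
The plan is to prove the two implications separately. The direction $(2) \Rightarrow (1)$ will be constructive, while $(1) \Rightarrow (2)$ will extract an asymptotic scalar from the approximating sequence of neural networks and exploit the structure of the quotient norm.

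For $(2) \Rightarrow (1)$, I first verify that the constant function $\beta \id_\Omega$ lies in $\overline{H_\sigma}$ with respect to $\|\cdot\|_{\Lp(\Omega)}$. Since $\sigma$ is non-constant, bounded and sigmoidal, it admits distinct finite limits $L_\pm := \lim_{t \to \pm\infty} \sigma(t)$. A suitable combination such as $\sigma(wx+b) + \sigma(-wx+b)$, with $w$ large and appropriate normalization, differs from $\id_\Omega$ only on an arbitrarily small interval and is uniformly bounded, so the embedding $L^\infty(\Omega) \subset \Lp(\Omega)$, together with the absolute continuity of the $\Lp$-modular on sets of small measure where $p$ is locally bounded, turns this into an $\Lp$-small error. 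Next, from $\|f - \beta \id_\Omega\|_Q = 0$ the definition of the quotient norm furnishes, for every $\eta > 0$, an element $h$ of the quotiented subspace (whose members are concentrated, in the appropriate sense, on a compact $K \subset \Omega$ where $p$ is bounded) with $\|f - \beta \id_\Omega - h\|_{\Lp(\Omega)} < \eta$. Theorem \ref{thm:UALpVariable} applied to $h$ on $K$ yields a neural network approximant of $h$, and adding the neural approximant of $\beta \id_\Omega$ produces $g_\varepsilon \in H_\sigma$ close to $f$.

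For $(1) \Rightarrow (2)$, the central observation is that every element $g = \sum_{j=1}^M \alpha_j \sigma(w_j x + b_j)$ in $H_\sigma$ has well-defined pointwise limits at the ends of $\Omega$, each a linear combination of $L_\pm$ (and of the values $\sigma(b_j)$ in the degenerate case $w_j = 0$). The first substantive step is therefore to show, from Definition \ref{def:LpQ} and the decay characterization obtained in \cite{amenta2019variablelp}, that to each such $g$ one can associate a unique scalar $\beta_g \in \R$ for which $\|g - \beta_g \id_\Omega\|_Q = 0$. Granting this, the rest is a Cauchy-sequence argument in the quotient: taking $(g_n) \subset H_\sigma$ with $\|f - g_n\|_{\Lp(\Omega)} \to 0$, one computes
\[
\|(\beta_n - \beta_m) \id_\Omega\|_Q = \|g_n - g_m\|_Q \leq \|g_n - g_m\|_{\Lp(\Omega)} \to 0,
\]
and since $\id_\Omega \in L^\infty(\Omega) \subset \Lp(\Omega)$ has nonzero quotient seminorm, the sequence $(\beta_n)$ is Cauchy in $\R$, hence convergent to some $\beta$. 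A final triangle inequality
\[
\|f - \beta \id_\Omega\|_Q \leq \|f - g_n\|_Q + \|g_n - \beta_n \id_\Omega\|_Q + \|(\beta_n - \beta) \id_\Omega\|_Q
\]
yields $\|f - \beta \id_\Omega\|_Q = 0$ in the limit.

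The main obstacle is the bridge claim used in the preceding paragraph: that every $g \in H_\sigma$ sits at a unique scalar multiple of $\id_\Omega$ in $\Lp(\Omega)/\ker\|\cdot\|_Q$. Establishing this requires a careful reading of Definition \ref{def:LpQ} — showing that the difference $g - \beta_g \id_\Omega$, which decays pointwise to zero at each end of $\Omega$ and remains uniformly bounded, falls in the kernel of $\|\cdot\|_Q$, and that $\id_\Omega$ itself does not. This is precisely the point at which the asymptotic behavior of sigmoidal networks meets the tail-decay description from \cite{amenta2019variablelp}, and it is where the interesting work of the proof lies; the Cauchy-in-the-quotient argument above is routine once the bridge is in place.
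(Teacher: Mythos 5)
Your overall architecture coincides with the paper's. For $(1)\Rightarrow(2)$ you attach to each $g\in H_\sigma$ its limit $\beta_g$ at infinity, show $\|[g-\beta_g\id_\Omega]\|_Q=0$, and run a Cauchy argument on the scalars (the paper instead shows the family $\{\beta_\varepsilon\}$ is bounded and extracts a convergent subsequence, which is cosmetically different); the ``bridge'' you correctly isolate as the crux is established exactly as you sketch, via $|g(x)-\beta_g|<\delta$ for $x>M$ and $\|[\delta\id_{(M,+\infty)}]\|_Q\leq\delta\, w(\Omega)$. The fact that $w(\Omega)>0$, which your Cauchy step needs, deserves a line: for $\lambda\leq 1$ one has $\lambda^{-p(x)}\geq 1$, so the integral in Proposition \ref{prop:charNormQ} diverges over a set of infinite measure and hence $w(\Omega)\geq 1$. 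For $(2)\Rightarrow(1)$ you split $f$ into $\beta\id_\Omega$ plus a piece close to a compactly supported function, which is also the paper's decomposition.

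There is, however, one concrete gap in your $(2)\Rightarrow(1)$. Theorem \ref{thm:UALpVariable} produces a \emph{truncated} sum, i.e.\ a function forced to vanish outside the compact $K$; the genuine element $g_1\in H_\sigma$ behind it does not vanish on $\Omega\setminus K$, which has infinite measure. There $g_1$ is merely a bounded function with a finite limit at infinity, so $\|g_1\|_{\Lp(\Omega\setminus K)}$ is finite (via $L^\infty(\Omega)\subset\Lp(\Omega)$) but in no way small, and ``approximant of $h$ on $K$ plus approximant of $\beta\id_\Omega$'' is therefore close to $f$ only on $K$. As written the construction fails. The repair is available with your own tools and is what the paper does: $\beta\id_\Omega-g_1\id_{\Omega\setminus K}$ is a bounded function with a finite limit at $+\infty$, hence approximable in $\Lp(\Omega)$ by some $g_2\in H_\sigma$ (Proposition \ref{prop:ApproxBoundedLimit}), and $g_1+g_2$ then works globally; this step must be stated. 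A minor additional point: your explicit recipe $\sigma(wx+b)+\sigma(-wx+b)$ for realizing constants breaks when $c_{+\infty}+c_{-\infty}=0$ (e.g.\ the hyperbolic tangent), but nothing is lost since constants lie exactly in $H_\sigma$ by taking $w=0$ and $b$ with $\sigma(b)\neq 0$.
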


This theorem appears as Theorem \ref{thm:UApnotbounded} in the main text.  In a nutshell, the quotient norm encodes the behavior of the function at $\infty$. Therefore, the functions that can be approximated are those which converge, in some sense, at $\infty$.

Finally, prior to these results we recall and adapt some classical results of universal approximation for some function spaces. More specifically, we address the space of continuous functions over a compact and locally integrable spaces. For the former, we prove an extension of the original result of universal approximation from continuous functions on a compact to the space of functions which vanish at infinity, in the unidimensional case, and a negative result of universal approximation, whereas for the latter we extend a result for radial activation functions to non-radial ones.  Moreover, even though variable Lebesgue spaces are locally integrable spaces, our contribution for those spaces is a major improvement, because our concept of distance comes from a norm and, therefore, approximations in this context are more interesting, since for example we can control the error of dilation.

\section{Preliminaries}\label{sec:prelim}

\subsection{Neural networks}\label{subsec:neuralnetworks}

In this subsection, we introduce the concepts and properties associated to neural networks that we will need for the rest of the paper. Some references for the mathematical formulation of neural networks in this context are \cite{haykin1998neuralnetworks}, \cite{bishop2006patternrecognition}, \cite{krosesmagt1993introneuralnetworks} or \cite{rojas1996neuralnetworks}, among many others.

In this paper, we focus on \textit{feedforward artificial neural networks} (denoted ANN hereafter), the simplest model for neural networks. A feedforward artificial neural network contains several nodes, arranged in layers serving differently depending on their position, namely \textit{input}, \textit{hidden} and \textit{output} nodes. Moreover, this type of neural networks only has a single input layer, which provides information from the environment to the network, and a single output layer, which transmits information from the network to the environment.

 In the past, different classes of ANN have been considered to show approximation in generic classes of functions. It is known that ANN with no hidden layer are not capable of approximating generic, non-linear, continuous functions \cite{widrow1990neuralnetworks}. On the opposite side,   ANN with two or more hidden layers, known as \textit{deep neural networks}, have given rise to a broad field of research in the past years. However, for simplicity we focus specifically in the case of  one hidden layer, commonly known as \textit{shallow neural networks}, for which typical results on universal approximation have been studied in the past. 

\begin{defi}
A \emph{feedforward shallow artificial neural network} (ANN) can be described by a finite linear combination of the form
\begin{equation*}
g(x) = \underset{j=1}{\overset{M}{\sum}} \alpha_j \, \sigma (w_j \cdot x + b_j) \, ,
\end{equation*}
where  $x \in \mathbb{R}^d$ represents the \emph{input} to the neural network, $g(x)\in \mathbb{R}$ the \emph{output}, $w_j \in \mathbb{R}^d$ and $\alpha_j\in \R$ are the \emph{weights} between first and second layer, and second and third layer, respectively, $b_j \in \mathbb{R}$ are the \emph{biases}, $\sigma: \mathbb{R} \rightarrow \mathbb{R}$ is the \emph{activation function} and $M$ the \emph{height}. 
\end{defi}

The subspace of all functions that can be obtained using an ANN with activation function $\sigma$ will be denoted hereafter by
$$
H_\sigma :=\left\{ g(x)=\sum_{j=1}^M  \alpha_j \, \sigma(w_j\cdot x +b_j)\right\} \, ,
$$
where $w_j, x \in \mathbb{R}^d$, $\alpha_j, b_j\in \R$ and $M \in \N$. Throughout the text, we will impose different conditions on $\sigma$ to obtain diverse results, since the role of the activation function is essential in the results of approximation of generic function spaces. Here we summarize some of the most typical examples for activation functions.

\begin{ex}\label{ex:activationfunctions}
Examples of activation functions:

\begin{itemize}
\item The Heaviside step function, which is piece-wise defined over  $\R$ in the following way:
\[ 
\sigma (x)=\begin{cases} 
      1  & \text{if } x>0 \, , \\
     0  & \text{if } x\leq 0 \, .
   \end{cases}
\]

\item The rectifier function (ReLU) is defined as:
\[ 
\sigma (x)=\begin{cases} 
       x & \text{if } x>0 \, , \\
       0  & \text{if } x\leq 0 \, .
   \end{cases}
\]

\item The logistic sigmoid is given by:
\[ 
\sigma (x)= \frac{1}{1+\operatorname{e}^{-x}} \, .
\] 

\item The hyperbolic tangent is an affinely transformed logistic sigmoid. It is given by:
\[ 
\sigma (x)= \operatorname{tanh}(x) \, .
\]

\end{itemize}

\end{ex}

For further examples of activation functions, see  \cite{scarselli1998survey} for instance.  Now, we introduce a property concerning the activation function which will appear often throughout the rest of the text. 

\begin{defi}\label{def:sigmoidal}
Given an activation function $\sigma : \mathbb{R} \rightarrow \mathbb{R}$, we say that $\sigma $ is \emph{sigmoidal} if 
\begin{equation*}
\sigma (t)=\begin{cases} 
       c_{+\infty}  & \text{as } t \rightarrow + \infty \, , \\
       c_{-\infty}  & \text{as }  t \rightarrow - \infty \, ,
   \end{cases}
\end{equation*}
where the constants $c_{+\infty}$ and $c_{-\infty}$ are finite. 

Usually, $c_{+\infty}$ and $c_{-\infty}$ are taken to be $1$ for $c_{+\infty}$ and $-1$ or $0$ for $c_{-\infty}$.
\end{defi}

\begin{remark}\label{rem:ReLUisDisc}
Note that all the activation functions presented in Example \ref{ex:activationfunctions} satisfy this condition. Indeed, even though it goes against the intuition for the ReLU, we can obtain another activation function which is continuous and bounded by combining properly two ReLU.

\end{remark}

\subsection{Variable Lebesgue spaces}\label{subsec:variableLp}

In this subsection, we introduce all the necessary notions and previous results on variable Lebesgue spaces to understand the results on approximation with neural networks in those spaces that will be presented in Section \ref{sec:approxvarLp}. A good reference for these spaces and their application to harmonic analysis is \cite{cruzuribe2013variablelp}.

For simplicity, the measurable space we consider hereafter is $(\Omega, \mathcal{B}, |\cdot|)$, where $\Omega\subseteq \R^d$, $\mathcal{B}$ is the $\sigma$-algebra of measurable sets and $|\cdot|$ is the Lebesgue measure. Other measurable spaces could be considered, but here we restrict to this special case, due to its importance for applications.

The motivation for the study of variable Lebesgue spaces is the following: Consider the real function $g(x):=\frac{1}{\sqrt{|x|}}$, which presents a singularity at $x=0$. This natural function does not belong to any $L^p(\R)$ for $1\leq p\leq +\infty$, because it either grows too fast at the origin or decays too slow at infinity. An idea to solve this matter and find a Lebesgue space to which it could belong would be to split the domain. With this on mind, we can prove $g\in L^1([-1,1])$ and $g\in L^4(\R\setminus [-1,1])$, for instance. The disadvantage of this approach is that for more complex examples, the splitting of the domain becomes increasingly difficult. Then, letting the exponent vary point-wise, we could control in a better way the singularities that the function might present at each point. This leads to the origin of \textit{variable Lebesgue spaces}.
 
\begin{defi}\label{def:exponent function}
An \emph{exponent function} is a measurable function $p:\Omega\to [1,+\infty)$.
\end{defi}

Note that the exponent function always takes a finite value at each point of the domain. It is possible to generalize the concept to take the value $+\infty,$ but this is just a technical issue in which we are not particularly interested here.

\begin{defi}\label{def:normVariableLp}
Given a measurable function $f$ and an exponent function $p$, the \emph{norm} is defined by
$$
\| f\|_{p(\cdot)} := \inf \left\{\lambda>0: \int_{\Omega}\left(\frac{\abs{f(x)}}{\lambda}\right)^{p(x)} \mathrm{d}x\leq 1  \right\}.
$$
\end{defi}

In some occasions, we might denote the previous norm by $\| f\|_{L^{p(\cdot)}(\Omega)}$ whenever it is necessary to emphasize the domain of the functions. There exist in the literature different ways to define the norm for these spaces. The previous expression is obtained through the Luxemburg norm, but it is also possible to use the Amemiya norm, or even employ a different modular than  the one used above, i.e. $\rho(f(x))=\int |f(x)|^{p(x)}$. 

We can now introduce the formal definition of variable Lebesgue spaces as follows.

\begin{defi}\label{def:VariableLp}
Let $p:\Omega\to [1,+\infty)$ be an exponent function and consider the space of functions given by:
$$
\Lp(\Omega):=\{f:\Omega\to \R: f \text{ measurable and } \| f\|_{p(\cdot)}<+\infty\} \, .
$$
$(\Lp(\Omega), \|\cdot\|_{p(\cdot)})$ is a Banach space, which we call \emph{variable Lebesgue space}.
\end{defi}

In particular, if the exponent function is constantly equal to a value $p$, we recover the classical Banach space $(L^p(\Omega), \|\cdot\|_p)$ with $1\leq p<\infty$, showing thus that the previous definition is a generalized version of the so-called Lebesgue spaces. When the domain is not relevant we might denote the previous spaces by $\Lp$.

Next, we present an essential definition which allows to split the behavior of variable Lebesgue spaces into two different cases.

\begin{defi}\label{def:pbounded}
We say that an exponent function $p:\Omega\to [1,+\infty)$ is \emph{bounded} when the essential supremum of $p$ verifies:
$$
\ds p_+:= \esssup_{x\in \Omega} p(x) < +\infty \, ,
$$
i.e. it is uniformly bounded except possibly in a set of zero Lebesgue measure. If $p$ is not bounded, we say that it is \emph{unbounded}.
\end{defi}

The importance of the boundedness of the exponent function resides in the fact that variable Lebesgue spaces with bounded exponent function behave similarly to classical Lebesgue spaces. Moreover, they verify  certain important properties, like the ones presented below, whose proofs can be found in \cite{cruzuribe2013variablelp}.
\begin{prop}\label{prop:LpVariableSeparable}
The variable Lebesgue space $(\Lp,\|\cdot\|_{p(\cdot)})$ is separable if, and only if, the exponent function is bounded.
\end{prop}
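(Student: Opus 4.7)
My plan is to prove the two implications independently. The ``if'' direction (boundedness of $p$ implies separability) proceeds by exhibiting an explicit countable dense family of ``rational'' simple functions. The ``only if'' direction, which is where the substance of the statement lies, is established by its contrapositive: whenever $p$ is essentially unbounded, I construct an uncountable $1$-separated family in $\Lp(\Omega)$.

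For the ``if'' direction, assume $p_+<+\infty$. The central observation is that under this hypothesis the modular controls the norm: if $\rho:=\int_\Omega |g(x)|^{p(x)}\,\mathrm{d}x\le 1$ then $\|g\|_{p(\cdot)}\le \rho^{1/p_+}$, because with $\lambda:=\rho^{1/p_+}\le 1$ one has $\lambda^{-p(x)}\le \lambda^{-p_+}=\rho^{-1}$ pointwise. Given $f\in \Lp(\Omega)$, set $E_n:=\{x\in\Omega:\ |f(x)|\le n\ \text{and}\ |x|\le n\}$ and consider the truncations $f_n:=f\,\chi_{E_n}$. Then $|f_n-f|\le 2|f|$ and $f_n\to f$ a.e., so Lebesgue's dominated convergence applied to $|(f_n-f)/\mu|^{p(x)}$ (dominated by $2^{p_+}(|f|/\mu)^{p(x)}\in L^1(\Omega)$ for any $\mu>\|f\|_{p(\cdot)}$) yields modular, and hence norm, convergence $f_n\to f$. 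Each $f_n$ is bounded and compactly supported, and such functions are approximated in modular by step functions of the form $\sum_i q_i\chi_{Q_i}$ with $q_i\in\mathbb{Q}$ and $Q_i$ ranging over dyadic cubes with rational vertices; this countable family is dense in $\Lp(\Omega)$.

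For the ``only if'' direction, suppose that $p$ is essentially unbounded. Using that Lebesgue measure is $\sigma$-finite and $p$ is a.e.\ finite-valued, I extract a strictly increasing sequence $n_k\uparrow +\infty$ and pairwise disjoint measurable sets $A_k\subseteq\Omega$ with $0<|A_k|\le 2^{-k}$ and $p\ge n_k$ a.e.\ on $A_k$. Define $c_k:=\|\chi_{A_k}\|_{p(\cdot)}^{-1}$, which is finite and positive since $0<|A_k|<\infty$ and $p$ is finite on $A_k$; by positive homogeneity $\|c_k\chi_{A_k}\|_{p(\cdot)}=1$, and the Luxemburg definition gives $\int_{A_k}c_k^{p(x)}\,\mathrm{d}x\le 1$. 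For each $S\subseteq \N$ set $f_S:=\sum_{k\in S} c_k\chi_{A_k}$. For $\lambda>1$ the inequality $\int_{A_k}(c_k/\lambda)^{p(x)}\,\mathrm{d}x\le \lambda^{-n_k}\int_{A_k}c_k^{p(x)}\,\mathrm{d}x\le \lambda^{-n_k}$ combined with the rapid growth of $n_k$ shows $\int_\Omega |f_S/\lambda|^{p(x)}\,\mathrm{d}x\le \sum_{k\ge 1}\lambda^{-n_k}\le 1$ for a single $\lambda$ independent of $S$; thus $\|f_S\|_{p(\cdot)}$ is uniformly bounded and every $f_S\in \Lp(\Omega)$. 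On the other hand, whenever $S\neq S'$ and $k\in S\triangle S'$, the function $f_S-f_{S'}$ agrees with $\pm c_k\chi_{A_k}$ on $A_k$, so the monotonicity of the Luxemburg modular under restriction yields $\|f_S-f_{S'}\|_{p(\cdot)}\ge \|c_k\chi_{A_k}\|_{p(\cdot)}=1$. The family $\{f_S\}_{S\subseteq\N}$ is therefore uncountable and pairwise $1$-separated, precluding the existence of a countable dense subset.

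The delicate point is the simultaneous calibration in the reverse direction: the weights $c_k$ must normalize each $c_k\chi_{A_k}$ to unit norm while keeping the superpositions $f_S$ uniformly in $\Lp(\Omega)$ for every, possibly infinite, $S\subseteq\N$. Both demands are reconciled precisely because $p\ge n_k$ is large exactly where the weight $c_k$ is active, causing the modular contribution $\int_{A_k}(c_k/\lambda)^{p(x)}\,\mathrm{d}x$ to decay at rate $\lambda^{-n_k}$ for $\lambda>1$, a geometric decay that can be made as fast as desired by thinning out the level sets $\{p\ge n\}$. This decay mechanism is unavailable when $p_+<+\infty$, in perfect agreement with the separability granted by the forward direction.
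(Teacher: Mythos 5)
Your proof is correct and complete: the modular-to-norm estimate $\|g\|_{p(\cdot)}\le\rho(g)^{1/p_+}$ makes the rational simple functions dense when $p_+<\infty$, and the disjoint sets $A_k\subseteq\{p\ge n_k\}$ with normalizing constants $c_k$ yield a genuine uncountable $1$-separated family when $p$ is unbounded. The paper does not prove this proposition itself but defers to the cited monograph of Cruz-Uribe and Fiorenza, and your argument is essentially the standard one given there, so no further comparison is needed.
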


Therefore, whenever the exponent $p$ is bounded, the space $\Lp$ is separable. For the next property,  we need to consider the  subspaces of $\Lp$ of compactly supported functions and smooth compactly supported functions, denoted by $\Lp_c$ and $C^\infty_c$, respectively. Now, one can prove the following characterization of bounded exponent for variable Lebesgue spaces, concerning their density in $\Lp$. 

\begin{prop}\label{prop:LpVariableDenseCompact}
 $\left(\Lp_c,\|\cdot\|_{p(\cdot)}\right)$ is dense  in $\Lp$ if, and only if, the exponent function is bounded.

Moreover,  $\left(C^\infty_c,\|\cdot\|_{p(\cdot)}\right)$ is dense  in $\Lp$ if, and only if, the exponent function is bounded.
\end{prop}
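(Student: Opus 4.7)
The plan is to establish both biconditionals simultaneously, splitting into the two directions: sufficiency (bounded exponent forces density) via truncation and mollification, and necessity (unbounded exponent obstructs density) via an explicit obstruction.

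For the sufficiency direction, assume $p_+ := \esssup_\Omega p < \infty$. The key mechanism is that a bounded exponent forces equivalence between modular convergence and norm convergence, i.e.\ $\int_\Omega |h_k(x)|^{p(x)}\,dx \to 0$ implies $\|h_k\|_{p(\cdot)} \to 0$. Given $f \in \Lp(\Omega)$, I would form the truncations $f_n := f \cdot \id_{B(0,n) \cap \{|f| \leq n\}}$; each $f_n$ is bounded, has compact support in $\Omega$, and thus lies in $\Lp_c$. Since $|f - f_n|^{p(x)} \leq |f|^{p(x)}$ pointwise and $|f - f_n|^{p(x)} \to 0$ almost everywhere, dominated convergence on the modular (after rescaling $f$ by a $\lambda > 0$ for which $\int |f/\lambda|^{p(x)}\,dx < \infty$, available by $f \in \Lp$) yields $\int_\Omega |f - f_n|^{p(x)}\,dx \to 0$, and the modular-to-norm passage gives $\|f - f_n\|_{p(\cdot)} \to 0$. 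To upgrade to $C^\infty_c$-density it then suffices, by what has just been shown, to approximate a bounded compactly supported $g$ by smooth ones; standard mollification $g_\varepsilon := g * \varphi_\varepsilon$ produces a sequence in $C^\infty_c$, uniformly bounded, supported in a fixed enlargement of $\mathrm{supp}(g)$, and converging to $g$ a.e., to which the same modular-to-norm argument applies.

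For the necessity direction, assume $p$ is unbounded. Exploiting $\esssup p = \infty$, I would select pairwise disjoint measurable sets $E_k \subset \Omega$ on which $p \geq k$, with $|E_k| = 2^{-k}$, positioned so that they \emph{escape every compact subset of $\Omega$}, meaning that for each compact $K \subset \Omega$ there exists $N$ with $E_k \cap K = \emptyset$ for all $k \geq N$. Then $f := \id_{\bigcup_k E_k}$ satisfies $\int_\Omega |f|^{p(x)}\,dx = \sum_k |E_k| = 1$, so $f \in \Lp(\Omega)$. For any $g \in \Lp_c$ with support contained in a compact $K$ as above, the lattice property of the Luxemburg norm yields $\|f - g\|_{p(\cdot)} \geq \|\id_{\bigcup_{k \geq N} E_k}\|_{p(\cdot)}$. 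For any $\lambda \leq 1/2$, $(1/\lambda)^{p(x)} \geq 2^k$ on $E_k$, so $\int_\Omega |\id_{\bigcup_{k \geq N} E_k}(x)/\lambda|^{p(x)}\,dx \geq \sum_{k \geq N} 2^k \cdot 2^{-k} = +\infty$, forcing $\|\id_{\bigcup_{k \geq N} E_k}\|_{p(\cdot)} \geq 1/2$ uniformly in $K$. Hence $f$ lies at distance at least $1/2$ from $\Lp_c$ and, a fortiori, from $C^\infty_c \subset \Lp_c$.

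The main obstacle is the construction of the $E_k$ in the necessity direction: they must simultaneously (i) lie inside $\{p \geq k\}$, (ii) escape every compact subset of $\Omega$, and (iii) have summable measure so that $f \in \Lp$. The pairing $|E_k| = 2^{-k}$ with $p \geq k$ balances these constraints and produces the clean lower bound $1/2$ for the tail norm, but the geometric feasibility of placing the $E_k$ outside every compact depends on the way $p$ fails to be bounded relative to the geometry of $\Omega$ (accumulation at $\partial\Omega$ if $\Omega$ is bounded, or at infinity if $\Omega$ is unbounded). This construction, together with the modular-to-norm argument for the sufficiency direction, is classical and is carried out in detail in \cite{cruzuribe2013variablelp}.
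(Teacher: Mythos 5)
Your sufficiency direction is essentially correct and is the standard argument: for $p_+<\infty$ the modular controls the norm (if $\rho(h)\le 1$ then $\|h\|_{p(\cdot)}\le \rho(h)^{1/p_+}$), so dominated convergence applied to $\rho((f-f_n)/\lambda)$ for a fixed admissible $\lambda$ handles truncation, and the same device handles mollification. The only slip is that $B(0,n)\cap\Omega$ need not be relatively compact in $\Omega$ when $\Omega\neq\R^d$; you should truncate along a compact exhaustion such as $K_n=\{x\in\Omega:\ |x|\le n,\ \operatorname{dist}(x,\partial\Omega)\ge 1/n\}$. Note also that the paper does not prove this proposition at all --- it cites \cite{cruzuribe2013variablelp} --- so your attempt must stand on its own.

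The necessity direction has a genuine gap, and it is exactly the point you flag and then defer to the reference: the construction of the $E_k$ cannot in general be carried out, and the witness $\id_{\bigcup_k E_k}$ is the wrong kind of witness. Concretely: (i) $\esssup p=\infty$ only gives $|\{p\ge k\}|>0$; if these measures decay super-geometrically (e.g.\ $\Omega=(0,1)$, $p(x)=\sqrt{\log(1/x)}$ near $0$) you are forced to take $|E_k|\ll 2^{-k}$, the series $\sum_k M^k|E_k|$ then converges for every $M>1$, the tail norms $\|\id_{\cup_{k\ge N}E_k}\|_{p(\cdot)}$ tend to $0$, and your $f$ is in fact approximable by its truncations. (ii) The sets $\{p\ge k\}$ need not escape every compact subset of $\Omega$: for $\Omega=\R$ and $p(x)=\max(2,1/|x|)$ (set $p(0)=2$) they all lie in $[-1,1]$, and there every $f\in\Lp(\R)$ satisfies $\|f-f\id_{[-n,n]}\|_{p(\cdot)}=\|f\id_{\{|x|>n\}}\|_{L^2}\to 0$, so \emph{no} function witnesses failure of $\Lp_c$-density; this shows the first biconditional must be read as the reference states it, namely for \emph{bounded} functions of compact support. (iii) Consequently the true obstruction when $p_+=\infty$ is unboundedness of function values, not escape of supports, and a bounded indicator cannot detect it. The standard witness is $f=\sum_k c_k\id_{E_k}$ with $E_k\subset\{k\le p<k+1\}$, $|E_k|$ as small as necessary, and $c_k\to\infty$ calibrated by $c_k^{\,k+1}|E_k|=2^{-k}$ (so $\rho(f)\le 1$, hence $f\in\Lp$), for which one checks that $\rho((f-g)/\lambda)=+\infty$ for all sufficiently small $\lambda$ and \emph{every} bounded $g$; this rules out approximation by $C^\infty_c$ and by bounded compactly supported functions in full generality. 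Replacing your indicator by this unbounded witness, and dropping the ``escape every compact'' requirement in favour of the boundedness obstruction, is what the cited proof actually does.
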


Next, we present an essential result concerning the duality of this class of spaces. Before stating it,  we recall that for any exponent function $p:\Omega\to [1,+\infty)$, its \emph{Hölder conjugate} is the exponent function $q$ defined point-wise by
$$
\frac{1}{q(x)}+\frac{1}{p(x)}=1 \, ,
$$
for almost every $x\in \Omega$. Note that if $p(x)=1$, then $q(x)=+\infty$. Although this situation does not follow completely the definition of exponent function presented in these lines, the function $q$ can also be interpreted as an exponent function in a generalized sense.

\begin{prop}\label{prop:LpVariableDual}
The dual space of $(\Lp,\|\cdot\|_{p(\cdot)})$ is $(\Lq,\|\cdot\|_{q(\cdot)})$ where $q$ is point-wise the Hölder conjugate of $p$ if, and only if, the exponent function $p$ is bounded. Furthermore, if $p$ is bounded, there is an equivalence between functionals, $T$, and functions in $\Lq$, $g$, through the following identity
$$
T(f)=\int_\Omega f(x)g(x)\mathrm{d}x \, , \phantom{sdad}\text{ for every } f \in \Lp \, .
$$
\end{prop}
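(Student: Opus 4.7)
The plan is to establish sufficiency (boundedness of $p$ implies $(\Lp)^*\cong \Lq$ via the integral pairing) by a Riesz-representation style argument, and necessity by exhibiting a functional on $\Lp$ not realizable as integration against any $\Lq$ element when $p$ is unbounded. The first step in the sufficiency direction is the variable-exponent H\"older inequality
$$
\int_\Omega |f(x)g(x)|\,\mathrm{d}x \leq K \|f\|_{p(\cdot)} \|g\|_{q(\cdot)},
$$
valid for $f\in \Lp$, $g\in \Lq$ with a universal constant (one can take $K=2$), proved by homogenizing and splitting $|fg|\leq \frac{1}{p(x)}|f|^{p(x)}+\frac{1}{q(x)}|g|^{q(x)}$ in Young's form. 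This immediately gives a bounded linear injection $g\mapsto T_g$, $T_g(f):=\int fg$, of $\Lq$ into $(\Lp)^*$.

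The core is surjectivity. Given $T\in (\Lp)^*$, I would define the set function $\nu(E):=T(\chi_E)$ for measurable sets $E\subset \Omega$ of finite Lebesgue measure. Using that $\|\chi_E\|_{p(\cdot)}\to 0$ as $|E|\to 0$---valid because $p$ is bounded and hence the modular controls the norm on small sets---continuity of $T$ forces $\nu$ to be $\sigma$-additive and absolutely continuous with respect to Lebesgue measure. Radon--Nikodym then produces a locally integrable $g$ with $\nu(E)=\int_E g\,\mathrm{d}x$; by linearity, $T(f)=\int fg\,\mathrm{d}x$ holds for every simple function of compact support, and Proposition \ref{prop:LpVariableDenseCompact} (where the assumption $p_+<+\infty$ is indispensable) allows us to extend this identity to all $f\in \Lp$ by density. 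To conclude $g\in \Lq$ with a norm bound $\|g\|_{q(\cdot)}\leq C\|T\|$, I would evaluate $T$ on the normalized truncations $f_n:=\sgn(g)\,|g|^{q(x)-1}\chi_{E_n}/\lambda_n$ with $E_n:=\{|g|\leq n\}\cap B(0,n)$, tune $\lambda_n$ so that $\|f_n\|_{p(\cdot)}\leq 1$, and pass to the limit via monotone convergence.

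For necessity, assume $p$ is unbounded and choose disjoint measurable sets $A_n\subset \Omega$ of positive finite measure with $\esssup_{A_n} p\geq n$. On the closed span of $\{\chi_{A_n}/\|\chi_{A_n}\|_{p(\cdot)}\}_n$, the Luxemburg norm becomes equivalent to an $\ell^\infty$-norm on coefficient sequences, since $p(x)\to +\infty$ on $A_n$ makes the local modular behave like $L^\infty$. A Hahn--Banach extension of a Banach limit on this $\ell^\infty$-copy yields a continuous functional on $\Lp$ that vanishes on every compactly supported function; were it represented by some $g\in \Lq$ via integration, density of compactly supported simple functions on each bounded set would force $g=0$ almost everywhere, contradicting non-triviality of the functional.

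The main obstacle is the norm bound $\|g\|_{q(\cdot)}\leq C\|T\|$ in the sufficiency direction. Because the Luxemburg norm is defined through a non-linear, non-homogeneous modular rather than an integral, the usual $L^p$-duality trick of testing against $|g|^{q-1}\sgn g$ must be carried out on truncations together with a careful choice of the normalization scalar $\lambda_n$, so that $f_n$ has $\Lp$-norm at most one while $T(f_n)$ approximates the dual pairing from below. Boundedness of $p$ is essential at this step: modular estimates of the form $\int |f|^{p(x)}\,\mathrm{d}x\leq 1$ only translate into norm bounds up to constants depending on $p_+$.
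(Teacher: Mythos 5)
The paper does not actually prove this proposition: it is imported from the standard reference on variable Lebesgue spaces (\cite{cruzuribe2013variablelp}), so there is no internal proof to compare against. Your sufficiency direction follows the same classical Riesz-representation route as that reference --- variable H\"older inequality, absolute continuity of $E\mapsto T(\id_E)$, Radon--Nikodym, density of compactly supported simple functions, and the truncation trick for the norm bound --- and is sound in outline. Two small caveats there: on the set $\{p=1\}$ the conjugate exponent is $+\infty$ and the test functions $\sgn(g)|g|^{q(x)-1}$ must be replaced by indicators of sets where $|g|$ is close to its essential supremum; and the identity $T(f)=\int fg$ can only be extended from simple functions to all of $\Lp$ \emph{after} the bound $\|g\|_{q(\cdot)}\lesssim \|T\|$ is established, so that step should precede, not follow, the density argument.

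The genuine gap is in the necessity direction. First, the \emph{closed linear span} of the normalized indicators $e_n:=\id_{A_n}/\|\id_{A_n}\|_{p(\cdot)}$ is a copy of $c_0$, not of $\ell^\infty$; to obtain the $\ell^\infty$-copy you must take the image of all bounded sequences under $(c_n)\mapsto \sum_n c_n e_n$ (which does land in $\Lp$ by your modular estimate). More seriously, a Hahn--Banach extension of the Banach limit from this subspace is \emph{not} guaranteed to vanish on compactly supported functions: Hahn--Banach controls nothing off the subspace, and if $p$ is unbounded on a compact subset of $\Omega$ (e.g.\ $p(x)=1/|x|$ near an interior point) the sets $A_n$, and hence the whole $\ell^\infty$-copy, may consist of compactly supported functions, so the claimed vanishing is false for your functional as constructed. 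The argument is repairable in two ways. (a) Keep your construction but change the last step: if $T=T_g$ extended the Banach limit, then $T(e_n)=0$ for every $n$, and dominated convergence (using $(\sum_n e_n)\,|g|\in L^1$ by H\"older) gives $T_g\bigl(\sum_n e_n\bigr)=\sum_n \int_{A_n} g/\|\id_{A_n}\|_{p(\cdot)}=0$, contradicting $T\bigl(\sum_n e_n\bigr)=\mathrm{LIM}(1,1,\dots)=1$. (b) More economically, invoke Proposition \ref{prop:LpVariableDenseCompact}: for unbounded $p$ the subspace $\Lp_c$ is not dense, so Hahn--Banach yields a nonzero functional annihilating $\overline{\Lp_c}$; any representing $g\in\Lq$ is locally integrable by H\"older and satisfies $\int_E g=0$ for every compact $E\subset\Omega$, forcing $g=0$ a.e.\ and hence $T=0$, a contradiction.
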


Moreover, still in the case in which the exponent function is bounded, there are also some inclusions between variable Lebesgue spaces whenever the domain is compact.

\begin{prop}\label{prop:variableLpContCompact}
Let $K$ be compact. Then, if we have two bounded exponent functions $p_1$ and $p_2$ verifying that for every $x\in K$ it holds that $p_1(x)\leq p_2(x)$, then
$$
L^{p_2(\cdot)}(K)\subseteq L^{p_1(\cdot)}(K) \, .
$$
\end{prop}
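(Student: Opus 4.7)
The plan is to prove the inclusion by obtaining a continuous embedding constant that depends only on $|K|$ and the essential infimum $p_{1,-}$, via a direct modular estimate on $K$.

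First, by the homogeneity of the norm, I can reduce to the case $\|f\|_{p_2(\cdot)}\leq 1$, which by definition of the Luxemburg norm (Definition \ref{def:normVariableLp}) gives $\int_K |f(x)|^{p_2(x)}\,dx \leq 1$. The goal is then to show that $\int_K |f(x)|^{p_1(x)}\,dx$ is also finite, with an explicit bound depending only on $|K|$ and $p_1$, and finally to divide $f$ by a suitable scalar to bring that modular below $1$.

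Second, I would split $K$ into the two measurable subsets $A:=\{x\in K:\,|f(x)|\leq 1\}$ and $B:=K\setminus A$. On $A$, since $p_1(x)\geq 1$, we have $|f(x)|^{p_1(x)}\leq 1$, so the integral over $A$ is at most $|K|$. On $B$, the base exceeds $1$, so using the hypothesis $p_1(x)\leq p_2(x)$ we get $|f(x)|^{p_1(x)}\leq |f(x)|^{p_2(x)}$, hence the integral over $B$ is bounded by $\int_K |f|^{p_2(x)}dx\leq 1$. Combining these gives
\[
\int_K |f(x)|^{p_1(x)}\,dx\leq |K|+1.
\]

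Third, to turn this modular bound into a norm bound, I rescale by $\lambda:=(|K|+1)^{1/p_{1,-}}$, where $p_{1,-}:=\operatorname*{ess\,inf}_{x\in K} p_1(x)\geq 1$. Since $\lambda\geq 1$ and $p_1(x)\geq p_{1,-}$, one has $\lambda^{-p_1(x)}\leq \lambda^{-p_{1,-}}=(|K|+1)^{-1}$. Thus
\[
\int_K\left(\frac{|f(x)|}{\lambda}\right)^{p_1(x)}dx\leq \lambda^{-p_{1,-}}\!\int_K|f(x)|^{p_1(x)}dx\leq 1,
\]
so $\|f\|_{p_1(\cdot)}\leq (|K|+1)^{1/p_{1,-}}$. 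Undoing the normalization yields the continuous embedding $\|f\|_{p_1(\cdot)}\leq (|K|+1)^{1/p_{1,-}}\|f\|_{p_2(\cdot)}$, which in particular proves the claimed inclusion.

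There is no real obstacle here; the only delicate point is making sure the direction of monotonicity is used correctly on each of the two subsets (the exponent matters in opposite ways according to whether the base is below or above $1$), and that boundedness of $p_1$ is used to guarantee $p_{1,-}\geq 1$ is attained so that the rescaling is finite.
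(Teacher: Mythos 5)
Your proof is correct and is essentially the standard modular argument that the paper defers to \cite{cruzuribe2013variablelp} for (splitting $K$ according to whether $|f|\leq 1$, bounding the modular by $|K|+1$, and rescaling to get the embedding constant $(|K|+1)^{1/p_{1,-}}$). One tiny inaccuracy: the bound $p_{1,-}\geq 1$ comes from the definition of an exponent function, not from the boundedness of $p_1$ — in fact your argument never uses boundedness, only $|K|<\infty$.
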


On the other hand, if the exponent function is unbounded, the situation is much more subtle. Variable Lebesgue spaces with unbounded exponent functions are much more complex in general. They are non-separable, we cannot approximate their elements by compactly supported functions  and the dual is more complicated (see previous work of one of the authors in \cite{amenta2019variablelp}). In the following, we present some notions needed to deal with unbounded exponent functions.
\begin{defi}\label{def:LpQ}
 Let $p:\Omega\to [1,+\infty)$ be an unbounded exponent function and consider the quotient space by the closure of the compactly supported functions of $\Lp$, i.e.
$$
\Lp_Q=\Lp/\overline{\Lp_c} \, .
$$
This quotient is equipped with the quotient norm $\|\cdot\|_Q$, which is defined by
$$
\|[f]\|_Q:=\inf_{g\in \Lp_c} \|f-g\|_{p(\cdot)} \, .
$$
Note that when $p$ is unbounded, this is nontrivial (Proposition \ref{prop:LpVariableDenseCompact}).
\end{defi}

This way of computing the quotient norm is impractical. To overcome that issue, in \cite{amenta2019variablelp} the following useful characterization was proven.

\begin{prop}\label{prop:charNormQ}
Let $p: \Omega \rightarrow [1, + \infty)$ be an unbounded exponent function. Then,
$$
\|[f]\|_Q=\inf \left\{\lambda>0: \int_{\Omega}\left(\frac{\abs{f(x)}}{\lambda}\right)^{p(x)} \mathrm{d}x<+\infty \right\} \, .
$$
\end{prop}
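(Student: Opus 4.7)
The plan is to denote $B(f) := \inf\{\lambda > 0 : \int_\Omega (|f|/\lambda)^{p(x)}\,\mathrm{d}x < +\infty\}$ and prove $\|[f]\|_Q = B(f)$ by establishing the two inequalities; for brevity I will write $\rho(h) := \int_\Omega |h(x)|^{p(x)}\,\mathrm{d}x$ for the modular.

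For $\|[f]\|_Q \leq B(f)$, I would use a truncation argument. Given $\lambda > B(f)$, one has $\rho(f/\lambda) < \infty$. Exhausting $\Omega$ by compact sets $K_n \nearrow \Omega$ and setting $g_n := f\mathbf{1}_{K_n}$, each $g_n$ is compactly supported and satisfies $\rho(g_n/\lambda) \leq \rho(f/\lambda) < \infty$, hence $g_n \in \Lp_c$. Dominated convergence (with dominant $(|f|/\lambda)^{p(\cdot)} \in L^1(\Omega)$) then yields
\[
\rho\bigl((f - g_n)/\lambda\bigr) = \int_{\Omega\setminus K_n}(|f|/\lambda)^{p(x)}\,\mathrm{d}x \xrightarrow[n\to\infty]{} 0,
\]
so $\|f - g_n\|_{p(\cdot)} \leq \lambda$ eventually, giving $\|[f]\|_Q \leq \lambda$. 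Sending $\lambda \searrow B(f)$ concludes this direction.

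For $\|[f]\|_Q \geq B(f)$, I would take any $g \in \Lp_c$ with compact support $K$ and any $\lambda > \|f - g\|_{p(\cdot)}$, so that $\rho((f-g)/\lambda) \leq 1$. Splitting
\[
\rho(f/\lambda) = \int_K (|f|/\lambda)^{p(x)}\,\mathrm{d}x + \int_{\Omega\setminus K}(|f|/\lambda)^{p(x)}\,\mathrm{d}x,
\]
the second piece is bounded by $\rho((f-g)/\lambda) \leq 1$ since $f \equiv f - g$ outside $K$. For the first piece, the decomposition $f\mathbf{1}_K = g + (f-g)\mathbf{1}_K \in \Lp$ combined with the local boundedness of $p$ on the compact $K$ (a hypothesis consistent with the unbounded-exponent setting used in the sequel of the paper) forces $\int_K(|f|/\lambda)^{p(x)}\,\mathrm{d}x < \infty$ for every $\lambda > 0$. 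Hence $\rho(f/\lambda) < \infty$, which gives $B(f) \leq \lambda$; infimizing over $\lambda$ and then over $g \in \Lp_c$ finishes the argument.

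The main obstacle lies in the second direction: one must bound $\int_K(|f|/\lambda)^{p(x)}\,\mathrm{d}x$ for a value of $\lambda$ that may lie strictly below $\|f\mathbf{1}_K\|_{p(\cdot)}$, where the defining inequality of the Luxemburg norm no longer applies. This is precisely the step that forces the use of the local regularity of $p$; without it the modular over the compact piece can be infinite for small $\lambda$, and the clean tail-only bound breaks down.
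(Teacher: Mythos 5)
The paper gives no proof of Proposition \ref{prop:charNormQ}; it is quoted from \cite{amenta2019variablelp}, so there is nothing in the text to compare your argument against. Taken on its own terms, your two-sided argument is sound. The direction $\|[f]\|_Q\leq B(f)$ is complete as written: for $\lambda>B(f)$ the truncations $f\id_{K_n}$ lie in $\Lp_c$ (being dominated by $f$), and monotone/dominated convergence gives $\int_{\Omega\setminus K_n}(|f|/\lambda)^{p(x)}\,\mathrm{d}x\to 0$, hence eventually $\leq 1$, hence $\|f-f\id_{K_n}\|_{p(\cdot)}\leq\lambda$. The direction $\|[f]\|_Q\geq B(f)$ is also correct once local boundedness of $p$ is granted; the one step you leave implicit deserves a line: choosing $\mu$ with $\int_K(|f|/\mu)^{p(x)}\,\mathrm{d}x\leq 1$ and setting $P:=\sup_K p<\infty$, for $0<\lambda<\mu$ one has $\int_K(|f|/\lambda)^{p(x)}\,\mathrm{d}x=\int_K(\mu/\lambda)^{p(x)}(|f|/\mu)^{p(x)}\,\mathrm{d}x\leq(\mu/\lambda)^P<\infty$.

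You are right that the local boundedness of $p$ on compacts is the crux, and it is worth being blunt: the proposition is \emph{false} as literally stated without that hypothesis, so your use of it is a necessity, not a convenience. For a counterexample, take $\Omega=\R$, $p(x)=1+1/|x|$ for $0<|x|\leq 1$ and $p(x)=|x|$ for $|x|>1$, and $f=\id_{[-1,1]}$. Then $f\in\Lp_c$ (indeed $\int_{-1}^{1}(1/\mu)^{p(x)}\,\mathrm{d}x\leq 2/\mu\leq 1$ for $\mu\geq 2$), so $\|[f]\|_Q=0$; yet $\int_{-1}^{1}\lambda^{-1-1/|x|}\,\mathrm{d}x=+\infty$ for every $\lambda<1$, so the right-hand side of the claimed identity equals $1$. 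The missing hypothesis is exactly the one assumed in the later sections of the paper (Proposition \ref{prop:ApproxBoundedLimit}, Theorem \ref{thm:UApnotbounded}) and in the setting of \cite{amenta2019variablelp}, and should be read into Proposition \ref{prop:charNormQ} as well; with it, your proof is correct.
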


Note that this expression has the same flavor than the norm of $\Lp$. Furthermore, it is easy to see that $\|[f]\|_Q\leq \|f\|_{p(\cdot)}$, because of the inclusion of the subsets of $\lambda$.

To conclude this subsection on variable Lebesgue spaces, we include a result which relates $\Lp$ spaces, when $p$ is unbounded, to the space of bounded functions. For that, given $A\subseteq \Omega$, we define the application
$$
w(A)=\|[\id_A ]\|_Q \, .
$$

\begin{prop}\label{prop:LinfContainedLp}
Let $p$ be an unbounded exponent function. Then, $L^\infty(\Omega)\subset \Lp(\Omega)$ if, and only if, $w(\Omega)<+\infty$.
\end{prop}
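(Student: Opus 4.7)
My plan is to prove the two implications separately, noting that the forward direction is essentially immediate while the converse carries the real content. For the forward direction, since $\id_\Omega \in L^\infty(\Omega)$ and hence by hypothesis $\id_\Omega \in \Lp(\Omega)$, I would invoke the elementary inequality $\|[f]\|_Q \leq \|f\|_{p(\cdot)}$ (recorded in the excerpt just after Proposition \ref{prop:charNormQ}) to conclude
$$w(\Omega) = \|[\id_\Omega]\|_Q \leq \|\id_\Omega\|_{p(\cdot)} < +\infty.$$

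For the converse, starting from $w(\Omega) < +\infty$, I would apply Proposition \ref{prop:charNormQ} to $\id_\Omega$ to produce some $\lambda_0 > 0$ with
$$C := \int_\Omega \lambda_0^{-p(x)}\,\mathrm{d}x < +\infty.$$
Given any $f \in L^\infty(\Omega)$ with $M := \|f\|_\infty$, the first step is to take $\lambda_1 := M\lambda_0$, so that $|f(x)|/\lambda_1 \leq 1/\lambda_0$ almost everywhere; since $t \mapsto t^{p(x)}$ is increasing on $[0,\infty)$, this pointwise bound integrates to
$$\int_\Omega \left(\frac{|f(x)|}{\lambda_1}\right)^{p(x)}\mathrm{d}x \leq \int_\Omega \lambda_0^{-p(x)}\,\mathrm{d}x = C.$$
This already shows that the Luxemburg modular of $f$ is finite at scale $\lambda_1$, but not that it is below $1$. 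The second step is a rescaling: for any $\lambda \geq \lambda_1$, using $p(x) \geq 1$ together with $\lambda_1/\lambda \leq 1$ gives $(\lambda_1/\lambda)^{p(x)} \leq \lambda_1/\lambda$, whence
$$\int_\Omega \left(\frac{|f(x)|}{\lambda}\right)^{p(x)}\mathrm{d}x \leq \frac{\lambda_1}{\lambda}\,C.$$
Choosing $\lambda := \lambda_1 \max(1,C)$ forces this to be at most $1$, so $\|f\|_{p(\cdot)} \leq \lambda < +\infty$ and $f \in \Lp(\Omega)$.

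The only nontrivial obstacle is precisely this rescaling step: because the exponent varies with $x$, the modular $\int (|f|/\lambda)^{p(x)}$ is not monotone in $\lambda$ in general, so finiteness of the modular at one scale does not automatically deliver the unit-ball condition built into the definition of the Luxemburg norm. The inequality $(\lambda_1/\lambda)^{p(x)} \leq \lambda_1/\lambda$ for $\lambda \geq \lambda_1$ is the decisive observation, and it relies crucially on $p(x) \geq 1$, which is part of the definition of exponent function. Everything else is bookkeeping.
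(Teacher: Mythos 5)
Your proof is correct, and note that the paper itself does not prove this proposition: it is stated in the preliminaries as a result imported from the cited prior work on unbounded exponents, so there is no internal argument to compare against. Both implications are sound. The forward direction via $w(\Omega)=\|[\id_\Omega]\|_Q\leq \|\id_\Omega\|_{p(\cdot)}<+\infty$ is exactly right, and the converse correctly extracts $\lambda_0$ from Proposition \ref{prop:charNormQ}, bounds the modular of an arbitrary bounded $f$ at scale $\lambda_1=\|f\|_\infty\,\lambda_0$, and then rescales using $(\lambda_1/\lambda)^{p(x)}\leq \lambda_1/\lambda$ to push the modular below $1$ (modulo the trivial case $\|f\|_\infty=0$, which you should dispose of in one line).

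One correction to your closing commentary: the modular $\lambda\mapsto\int_\Omega(|f(x)|/\lambda)^{p(x)}\,\mathrm{d}x$ \emph{is} monotone (non-increasing) in $\lambda$, since for each fixed $x$ the integrand $(|f(x)|/\lambda)^{p(x)}$ decreases as $\lambda$ grows, however $p$ varies. The genuine obstacle your rescaling step overcomes is a different one: monotonicity alone does not guarantee the modular ever drops below $1$ once it is known to be finite, and in the constant-exponent case one would simply pull $\lambda^{-p}$ out of the integral, which is unavailable when $p$ depends on $x$. Your inequality $(\lambda_1/\lambda)^{p(x)}\leq\lambda_1/\lambda$, valid precisely because $p(x)\geq 1$, is a correct and clean substitute; alternatively, dominated convergence with dominating function $(|f(x)|/\lambda_1)^{p(x)}$ shows the modular tends to $0$ as $\lambda\to+\infty$. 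So the proof stands as written; only the diagnosis of \emph{why} the extra step is needed should be amended.
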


By virtue of the characterization of the quotient norm, this result essentially states that, when $\Omega$ has infinite Lebesgue measure, if $p$ diverges fast enough, then $w(\Omega)<+\infty$ and $L^\infty(\Omega)\subset \Lp(\Omega)$. Some important examples of exponent functions, when we take $\Omega=[1,+\infty)$, are $p(x)=x$ and $p(x)=[x]$, where $[x]$ denotes the integer part of $x$. These exponent functions verify $w(\Omega)<+\infty$. 

Nevertheless, not every exponent function verifies this condition, and this is something that can be identified from its velocity of divergence. Indeed, for example, $p(x)=1+\log x$ or any other exponent function which diverges faster verifies $w(\Omega)<+\infty$. However, there are other unbounded exponent functions which do not verify the previous condition, such as $p(x)=1+\log(1+\log x)$ or any other exponent function which diverges slower than this.

\section{Universal approximation in function spaces}\label{sec:UA}

The aim of this section is to present some of the known results about the property of \textit{Universal Approximation} (UA) for classical function spaces, as well as some improvements to some of them. A nice survey of these results in soft computing techniques is \cite{tikk2003survey}. We are particularly interested in the ones related to the artificial neural networks (ANN) introduced in the previous section.

 Originally, in the mathematical theory of artificial neural networks, the Universal Approximation Theorem stated that a feedforward neural network with a single hidden layer containing a finite number of neurons can approximate arbitrarily well real-valued continuous functions on compact subsets of $\mathbb{R}^d$. This result was later extended to several other spaces (see \cite{park1991universal}, \cite{hornik1991approximation}, \cite{hornik1989multilayer} \cite{cybenko1989approximation}).

In this section, we focus on results of universal approximation in various function spaces, namely those of continuous functions and spaces of locally integrable functions. For each of these classes, we review some of the previous literature and present some new results, with improvements concerning either the domain of the functions, the domain of the activation functions or the techniques involved to prove them.

\subsection{Space of continuous functions}\label{subsec:UAcont}

In 1989, in both \cite{cybenko1989approximation} and \cite{hornik1989multilayer}, the authors proved simultaneously  for many different activation functions that for any continuous function on a compact set $K$ of $\R^d$ and any $\varepsilon>0$, there exists a feedforward neural network with one hidden layer which uniformly approximates the function with precision $\varepsilon$. Before stating formally the result and proving it, we introduce a concept concerning activation functions that is necessary to understand the statement of the theorem.

\begin{defi}
Given an activation function $\sigma : \mathbb{R} \rightarrow \mathbb{R}$ and a compact $K\subset \R^d$, we say that $\sigma $ is \emph{discriminatory for $C(K)$} if for $\mu$ a finite, signed regular Borel measure on $K$, the fact that the following  holds
\begin{equation*}
\int_K \sigma( w \cdot x + b) \, \operatorname{d} \mu (x) = 0
\end{equation*}
for all $w \in \R^d$ and $b \in \mathbb{R}$ implies that $\mu \equiv 0$.

\end{defi}

Now, we can state the first result on universal approximation using ANN, namely the universal approximation theorem for continuous functions on a compact, whose proof can be found in  \cite[Theorem 1]{cybenko1989approximation} or \cite[Theorem 2.1]{hornik1989multilayer}.

\begin{theorem}\label{thm:UAC(K)}
Let $\sigma$ be any continuous discriminatory for $C(K)$ activation function. Then, the finite sums of the form
$$
\displaystyle \sum_{j=1}^M \alpha_j \, \sigma(w_j\cdot x+b_j)
$$
are dense in $C(K)$, the space of continuous functions over a compact $K$.
\end{theorem}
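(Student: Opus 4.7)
The plan is the standard Hahn--Banach / Riesz duality argument. Let $S$ denote the linear span of the set $\{x\mapsto \sigma(w\cdot x+b):w\in\R^d,\ b\in\R\}$ restricted to $K$. Since $\sigma$ is continuous and $K$ is compact, each of these functions lies in $C(K)$, so $S\subseteq C(K)$. The goal is to show that the closure $\overline{S}$ in the sup norm equals $C(K)$.

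I would argue by contradiction: suppose $\overline{S}\subsetneq C(K)$. By the Hahn--Banach theorem, there exists a nonzero bounded linear functional $L\in C(K)^*$ that vanishes on $\overline{S}$. The Riesz representation theorem identifies $L$ with integration against a finite signed regular Borel measure $\mu$ on $K$, i.e.
\[
L(f)=\int_K f(x)\,\mathrm{d}\mu(x),\qquad f\in C(K).
\]
In particular, taking $f(x)=\sigma(w\cdot x+b)$ gives
\[
\int_K \sigma(w\cdot x+b)\,\mathrm{d}\mu(x)=0 \qquad \text{for every } w\in\R^d,\ b\in\R.
\]
The hypothesis that $\sigma$ is discriminatory for $C(K)$ then forces $\mu\equiv 0$, which contradicts $L\neq 0$. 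Hence $\overline{S}=C(K)$, which is exactly the conclusion of the theorem.

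The main conceptual step is really built into the definition of \emph{discriminatory}: all the analytic work of ruling out obstructions to density is packaged into that hypothesis. Consequently, the proof itself is a short functional-analytic argument, and the genuine difficulty (which is not required here, since it is baked into the hypothesis) would normally be to \emph{verify} that a given $\sigma$ is discriminatory, e.g.\ for a sigmoidal $\sigma$ as in Cybenko's original argument using Fourier-analytic or translation-invariance techniques on ridge functions $\sigma(w\cdot x+b)$. For the statement as given, however, no such verification is needed: continuity of $\sigma$ is used only to guarantee $S\subseteq C(K)$ so that Hahn--Banach and Riesz may be applied in $C(K)^*$.
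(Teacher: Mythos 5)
Your proof is correct and is exactly the standard Hahn--Banach plus Riesz representation argument of Cybenko, which is the proof the paper points to (it cites \cite{cybenko1989approximation} and \cite{hornik1989multilayer} rather than reproving the result) and which the paper itself mirrors in its proof of Theorem \ref{thm:UALpVariable}. No issues.
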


Note from this result that a natural question is whether an activation function is discriminatory for $C(K)$.  We can provide the following sufficient conditions to check when a continuous  function $\sigma$ is discriminatory for $C(K)$, whose proof also appears in \cite[Lemma 1]{cybenko1989approximation} and \cite[Theorem 5]{hornik1991approximation}.

\begin{prop}\label{prop:SufficientCondDisc}
Any bounded, measurable non-constant function, $\sigma$, is discriminatory for $C(K)$ for every compact $K\subset \R^d$. In particular, any continuous sigmoidal function is discriminatory.
\end{prop}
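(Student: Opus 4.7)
The plan is to use Fourier analysis after a one-dimensional reduction. Suppose $\mu$ is a finite signed regular Borel measure on $K$ with $\int_K \sigma(w\cdot x + b)\, \mathrm{d}\mu(x) = 0$ for every $w\in \R^d$ and $b\in \R$. I would view $\mu$ as a compactly supported finite signed Borel measure on $\R^d$ and, for each direction $w$, push it forward under $x\mapsto w\cdot x$ to obtain a compactly supported finite signed measure $\mu_w$ on $\R$. The hypothesis then reads $\int_\R \sigma(t+b)\, \mathrm{d}\mu_w(t) = 0$ for all $b$, which is exactly the convolution identity $\sigma \ast \check{\mu}_w \equiv 0$ on $\R$, where $\check{\mu}_w$ denotes the reflected measure defined by $\check{\mu}_w(A) = \mu_w(-A)$.

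Next I would Fourier-transform this identity on $\R$. Since $\sigma \in L^\infty(\R) \subset \mathcal{S}'(\R)$, the Fourier transform $\widehat{\sigma}$ is a well-defined tempered distribution; and since $\check{\mu}_w$ has compact support, $\widehat{\check{\mu}_w}$ is an entire function, equal up to sign convention to $\xi \mapsto \widehat{\mu}(-\xi w)$. The convolution equation thus becomes the distributional product identity $\widehat{\sigma}\cdot \widehat{\check{\mu}_w} = 0$ in $\mathcal{S}'(\R)$, which is meaningful because $\widehat{\check{\mu}_w}$ is smooth. The central idea is then to exploit that $\sigma$ is bounded and non-constant in order to locate a frequency $\xi_0 \neq 0$ lying in $\operatorname{supp}(\widehat{\sigma})$: if $\operatorname{supp}(\widehat{\sigma})\subseteq\{0\}$, then $\widehat{\sigma}$ is a finite linear combination of derivatives of $\delta_0$, whence $\sigma$ coincides almost everywhere with a polynomial, which is bounded on $\R$ only if it is constant.

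Fix such a $\xi_0$. The main claim to establish is that $\widehat{\check{\mu}_w}(\xi_0) = 0$ for every $w\in \R^d$. Indeed, if $\widehat{\check{\mu}_w}(\xi_0) \neq 0$ for some $w$, continuity would yield an open neighborhood $U$ of $\xi_0$ on which $\widehat{\check{\mu}_w}$ is smooth and nowhere vanishing, so that for any $\phi \in C^\infty_c(U)$ the function $\phi/\widehat{\check{\mu}_w}$ is again a test function, and the distributional identity would force $\langle \widehat{\sigma}, \phi\rangle = \langle \widehat{\sigma}\cdot \widehat{\check{\mu}_w}, \phi/\widehat{\check{\mu}_w}\rangle = 0$, contradicting $\xi_0 \in \operatorname{supp}(\widehat{\sigma})$. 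Since $\widehat{\check{\mu}_w}(\xi_0) = \widehat{\mu}(-\xi_0 w)$ and $\xi_0 \neq 0$, varying $w$ over $\R^d$ forces $\widehat{\mu}\equiv 0$, and hence $\mu = 0$ by injectivity of the Fourier transform on finite measures. The main obstacle I expect is precisely this final step, namely extracting a single nonzero frequency $\xi_0$ that is common to all $w$; the spectral characterization of non-constant bounded functions is exactly what supplies it. The \emph{in particular} statement about continuous sigmoidal activations is then immediate, since such functions are automatically bounded (being continuous on $\R$ with finite limits at $\pm\infty$) and non-constant, hence fall within the scope of the general result.
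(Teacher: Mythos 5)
Your argument is correct and essentially complete, but note that the paper does not actually prove this proposition in the text: it defers entirely to Cybenko's Lemma~1 and Hornik's Theorem~5, so there is no in-paper proof to compare against. What you have written is the distributional Fourier argument in the style of Leshno--Lin--Pinkus--Schocken: push $\mu$ forward along $x\mapsto w\cdot x$, read the hypothesis as $\sigma\ast\check{\mu}_w\equiv 0$, pass to Fourier transforms in $\mathcal{S}'(\R)$ (legitimate since $\check{\mu}_w$ is a compactly supported finite measure, so $\widehat{\check{\mu}_w}$ is entire with polynomial growth and the product with the tempered distribution $\widehat{\sigma}$ is well defined), and observe that $\operatorname{supp}(\widehat{\sigma})\subseteq\{0\}$ would make $\sigma$ an a.e.\ polynomial, hence constant by boundedness --- so a nonzero frequency $\xi_0\in\operatorname{supp}(\widehat{\sigma})$ exists, and the division-by-a-nonvanishing-smooth-function step forces $\widehat{\mu}(-\xi_0 w)=0$ for every $w$, whence $\mu=0$. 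All of these steps are standard and sound, and your route in fact proves the stronger non-polynomial characterization that the paper records separately as Proposition~\ref{prop:EquivalenceCondDisc}; by contrast, Cybenko's cited argument is more elementary but narrower (it lets the ridge functions degenerate to indicators of half-spaces via dominated convergence and then shows a finite measure annihilating all half-spaces vanishes), and Hornik's handles the bounded non-constant case by yet another reduction. The only point to tighten is the ``in particular'' clause: under Definition~\ref{def:sigmoidal} a constant function is formally sigmoidal, so non-constancy is not automatic and must be assumed (as the statement of the proposition itself implicitly does); with that reading your deduction is immediate.
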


By virtue of this result, we can check that all the examples of activation functions provided in Example \ref{ex:activationfunctions} are discriminatory for $C(K)$. However, to get a continuous approximation the activation function needs to be continuous; thus, to apply Theorem \ref{thm:UAC(K)} we can only use the examples that are continuous, namely the rectifier, the logistic sigmoid and the hyperbolic tangent. 

As a generalization of the previous result, shortly after it the following characterization for discriminatory was provided in \cite{leshno1993multilayer}.

\begin{prop}\label{prop:EquivalenceCondDisc}
Let $\sigma$ be locally bounded and continuous almost everywhere. Then, it is discriminatory for $C(K)$ for every compact $K\subset \R^d$ if, and only if, it is different from an algebraic polynomial almost everywhere.
\end{prop}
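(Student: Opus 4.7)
The proposition is an equivalence, so I would prove both directions, exploiting the Hahn--Banach duality between $C(K)$ and the finite signed Borel measures on $K$ to translate ``$\sigma$ is discriminatory for $C(K)$'' into the density of $H_\sigma|_K$ in $C(K)$.

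\textbf{Necessity.} If $\sigma$ equals a polynomial of degree $n$ almost everywhere, then each ridge function $\sigma(w\cdot x+b)$ is a polynomial in $x$ of total degree at most $n$, so $H_\sigma$ sits inside a fixed finite-dimensional subspace of $C(K)$, which is not dense for any compact $K$ with nonempty interior; Hahn--Banach then supplies a nonzero annihilating Borel measure.

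\textbf{Sufficiency.} The goal is to show every polynomial on $K$ lies in $\overline{H_\sigma}$, after which the Weierstrass theorem closes the argument. I would carry this out in three stages. \emph{Mollification:} for $\varphi\in C_c^\infty(\R)$, the convolution $\tilde\sigma:=\sigma*\varphi$ is smooth, and compactness of $\mathrm{supp}\,\varphi$ lets one approximate the convolution integral by Riemann sums uniformly on any compact set, so each $\tilde\sigma(w\cdot x+b)$ is a uniform limit on $K$ of elements of $H_\sigma$ and hence lies in $\overline{H_\sigma}$. Crucially, some choice of $\varphi$ yields a $\tilde\sigma$ that is not itself a polynomial, for otherwise a distributional argument (examining $\hat{\sigma}$ after a local truncation to see that it must be supported at the origin) would force $\sigma$ to be almost everywhere a polynomial, contradicting the hypothesis. \emph{Extraction of monomials:} since $\tilde\sigma\in C^\infty(\R)$ is not a polynomial, no derivative $\tilde\sigma^{(k)}$ vanishes identically, so for each $k$ there exists $b_k\in\R$ with $\tilde\sigma^{(k)}(b_k)\neq 0$; the $k$-th iterated finite difference in the parameter $\lambda$ of the map $\lambda\mapsto\tilde\sigma(\lambda(w\cdot x)+b_k)$ is a linear combination of functions $\tilde\sigma(\lambda_i(w\cdot x)+b_k)$ already in $\overline{H_\sigma}$, and converges uniformly on $K$ to a nonzero scalar multiple of $(w\cdot x)^k$ as the step size tends to zero. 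Hence $(w\cdot x)^k\in\overline{H_\sigma}$ for every $w\in\R^d$ and every $k\in\N$. \emph{Polarization:} a classical polarization identity shows that $\{(w\cdot x)^k:w\in\R^d\}$ spans the homogeneous polynomials of degree $k$ in $x$, so $\overline{H_\sigma}$ contains every polynomial.

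The main obstacle I anticipate is the mollification step, specifically the promotion of ``$\sigma$ is not a.e.~a polynomial'' to the existence of a single $\varphi\in C_c^\infty(\R)$ for which $\sigma*\varphi$ is not a polynomial. One must rule out the possibility that every mollification of $\sigma$ is a polynomial of some (a priori unbounded) degree, which requires either a distributional/Fourier argument or a uniform degree bound obtained by comparing different mollifiers so that one can pass to the limit along an approximate identity inside a fixed finite-dimensional polynomial space. By contrast, the local boundedness and continuity-almost-everywhere hypothesis on $\sigma$ is exactly what is needed to make the convolution, the Riemann-sum approximation, and the finite differences well-defined; the subsequent smooth-calculus and linear-algebra manipulations are routine.
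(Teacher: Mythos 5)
The paper offers no proof of this proposition; it is quoted from \cite{leshno1993multilayer}, and your outline is essentially the argument of that reference: mollify, extract the monomials $(w\cdot x)^k$ by differentiating in the ridge parameter, polarize, and invoke Weierstrass. The obstacle you single out --- producing one $\varphi\in C_c^\infty(\R)$ with $\sigma*\varphi$ not a polynomial --- is indeed the crux, and it is resolved there exactly along the second of the two lines you anticipate: if $\sigma*\varphi$ were a polynomial for every $\varphi$, a Baire category argument applied to the closed subspaces $\{\varphi : \deg(\sigma*\varphi)\le k\}$ of the test functions supported in a fixed interval forces a uniform degree bound $k$, whence $D^{k+1}\sigma=0$ in the sense of distributions and $\sigma$ is a.e.\ a polynomial of degree at most $k$, a contradiction.

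The genuine weak point is your decision to frame both directions inside $C(K)$ with the sup norm. Since $\sigma$ is only continuous almost everywhere, the ridge functions $\sigma(w\cdot x+b)$ need not be continuous, so $H_\sigma$ is not a subspace of $C(K)$: ``discriminatory'' cannot be translated into sup-norm density of $H_\sigma|_K$ in $C(K)$, and your Riemann-sum step does not exhibit $\tilde\sigma(w\cdot x+b)$ as a uniform limit of elements of $H_\sigma$. The repair is to run the sufficiency argument directly at the level of the annihilating measure: if $\int_K\sigma(w\cdot x+b)\,\mathrm{d}\mu=0$ for all $w,b$, then Fubini (legitimate because $\sigma$ is locally bounded, $\varphi$ has compact support and $\mu$ is finite) gives $\int_K(\sigma*\varphi)(w\cdot x+b)\,\mathrm{d}\mu=0$; differentiating $k$ times under the integral in the parameter $\lambda$ of $\lambda\mapsto(\sigma*\varphi)(\lambda\, w\cdot x+b)$ and evaluating at $\lambda=0$ yields $\tilde\sigma^{(k)}(b)\int_K(w\cdot x)^k\,\mathrm{d}\mu=0$, after which your polarization step, Weierstrass and the Riesz representation give $\mu=0$. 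The same caution applies to necessity: if $\sigma=P$ a.e.\ (Lebesgue), the ridge function agrees with $P(w\cdot x+b)$ only Lebesgue-a.e., so a measure produced abstractly by Hahn--Banach could charge the exceptional null set and fail to annihilate the actual functions $\sigma(w_j\cdot x+b_j)$; instead take $K$ with nonempty interior and $\mathrm{d}\mu=h\,\mathrm{d}x$ with $0\ne h\in L^2(K)$ orthogonal to the finite-dimensional space of polynomials of degree at most $\deg P$, which annihilates every $\sigma(w\cdot x+b)$ because the integrands agree $\mathrm{d}x$-a.e. With these repairs your proof is complete.
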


Building on these results, we present now some improvements to some of them. The following is a slight improvement of Theorem \ref{thm:UAC(K)}, in the sense that it extends that result from a compact to the space of  functions which vanish at infinity. As a counterpart, it only works in the unidimensional case.

\begin{prop}\label{prop:DiscriminatoryC0(R)}
Let $\sigma\in C(\R)$ be sigmoidal and  non-constant, and let $f\in C(\R)$ with compact support. Then, for every $ \varepsilon>0$ there is $g_\varepsilon\in H_\sigma$ such that 
$$\ds \sup_{x\in\R}|f(x)-g_\varepsilon(x)|<\varepsilon \, .$$
\end{prop}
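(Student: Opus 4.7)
The plan is to exploit the sigmoidal behavior of $\sigma$ to build ``bump'' functions in $H_\sigma$ that vanish at $\pm\infty$, and then to combine them to approximate $f$ uniformly on all of $\mathbb{R}$. The key point beyond Theorem \ref{thm:UAC(K)} is that finite linear combinations of sigmoidals that vanish at infinity can act as an approximate partition of unity of $\mathrm{supp}(f)$, so that the error outside the support of $f$ is also uniformly controlled.

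I would first treat the case $c_{+\infty}\neq c_{-\infty}$ and construct, for $a<b$ and $\lambda>0$, the bumps
$$\tilde\psi_{a,b,\lambda}(x) := \frac{1}{c_{+\infty}-c_{-\infty}}\bigl[\sigma(\lambda(x-a))-\sigma(\lambda(x-b))\bigr] \in H_\sigma.$$
As $\lambda\to\infty$, $\tilde\psi_{a,b,\lambda}\to\mathbf{1}_{(a,b)}$ pointwise away from the endpoints, and for any fixed $\lambda$, $\tilde\psi_{a,b,\lambda}(x)\to 0$ as $|x|\to\infty$ since both terms converge to the common limit $c_{\pm\infty}$. In the degenerate case $c_{+\infty}=c_{-\infty}=c$, non-constancy of $\sigma$ ensures that $\sigma-c$ is a nonzero continuous function vanishing at infinity, and a similar strategy goes through with rescaled copies of $\sigma$ serving as narrow bumps (the constant $c$ being realised in $H_\sigma$ as $\sigma(b)$ with $w=0$).

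Next, with $\mathrm{supp}(f)\subseteq[-M,M]$, I would use uniform continuity of $f$ to pick a partition $-M=x_0<x_1<\cdots<x_N=M$ with mesh $r$ small enough that $|f(x)-f(x_{j-1})|$ is small on each $[x_{j-1},x_j]$, and define
$$g_\lambda(x)=\sum_{j=1}^N f(x_{j-1})\,\tilde\psi_{x_{j-1},x_j,\lambda}(x)\in H_\sigma.$$
The crucial telescoping identity
$$\sum_{j=1}^N \tilde\psi_{x_{j-1},x_j,\lambda}(x)=\frac{1}{c_{+\infty}-c_{-\infty}}\bigl[\sigma(\lambda(x-x_0))-\sigma(\lambda(x-x_N))\bigr]\xrightarrow[\lambda\to\infty]{}\mathbf{1}_{(-M,M)}$$
then acts as an approximate partition of unity. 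Bounding $|f-g_\lambda|$ splits into three regions: (a) if $x$ lies deep inside some $I_k=[x_{k-1},x_k]$, only $\tilde\psi_{x_{k-1},x_k,\lambda}(x)\approx 1$ is non-negligible, so $g_\lambda(x)\approx f(x_{k-1})\approx f(x)$; (b) if $x$ is close to an interior node $x_k$, telescoping gives $\tilde\psi_{x_{k-1},x_k,\lambda}+\tilde\psi_{x_k,x_{k+1},\lambda}\approx 1$ while the rest are negligible, so $g_\lambda(x)$ is close to a combination of $f(x_{k-1})$ and $f(x_k)$, both close to $f(x)$; (c) for $|x|$ large, all the arguments $\lambda(x-x_j)$ share the same sign and tend to $\pm\infty$ simultaneously, so each $\tilde\psi_j$, and hence $g_\lambda$, is uniformly small, while $f(x)=0$.

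Parameters have to be chosen in order: first $r$ from the modulus of continuity of $f$ (possibly refined to absorb the uniform bound $\|\sigma\|_\infty/|c_{+\infty}-c_{-\infty}|$ appearing in regime (b)), then $\lambda$ large enough that regimes (a) and (b) give errors below $\varepsilon/2$, and finally, with $\lambda$ and the partition fixed, one uses the sigmoidal limits of the finitely many summands of $g_\lambda$ to locate $L$ with $|g_\lambda(x)|<\varepsilon$ for $|x|>L$. The main obstacle is regime (b): a single bump is not uniformly close to $\mathbf{1}_{(a,b)}$ near its endpoints, and only the telescoping across adjacent bumps rescues uniform control there.
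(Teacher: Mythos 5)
Your argument is correct in the main case and takes a genuinely different route from the paper. The paper's proof is soft: it invokes Proposition \ref{prop:SufficientCondDisc} to get density in each $C([-\ell,\ell])$ and then argues by a closure/duality step that $\sigma$ is ``discriminatory for $C_0(\R)$''. You instead build explicit approximants: the telescoping bumps $\tilde\psi_{a,b,\lambda}$, the Riemann-sum approximant $g_\lambda=\sum_j f(x_{j-1})\tilde\psi_{x_{j-1},x_j,\lambda}$, and the three-regime error analysis. When $c_{+\infty}\neq c_{-\infty}$ this is complete and sound: you correctly identify that the only delicate point is near the nodes, where a single bump is not uniformly close to $\mathbf{1}_{(a,b)}$ but the telescoped pair is, and that the resulting term $(f(x_k)-f(x_{k-1}))\tilde\psi_{k+1}$ must be absorbed by choosing the mesh $r$ against the fixed constant $\|\sigma\|_\infty/|c_{+\infty}-c_{-\infty}|$ \emph{before} sending $\lambda\to\infty$; the region just outside $[-M,M]$ is then handled by the same estimate because $f(\pm M)=0$. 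What this buys over the paper's argument is an explicit, quantitative approximant (the error is controlled by the modulus of continuity of $f$ and the rate at which $\sigma$ approaches its limits), and it avoids the somewhat delicate claim that discriminatory-ness passes from the compacts to $C_0(\R)$.

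The one genuine gap is the degenerate case $c_{+\infty}=c_{-\infty}=c$, which is allowed by Definition \ref{def:sigmoidal} and hence within the scope of the proposition. There your whole mechanism collapses: $\sigma(\lambda(x-a))-\sigma(\lambda(x-b))\to 0$ pointwise for every $x\neq a,b$ as $\lambda\to\infty$, so the telescoping identity produces something tending to $0$, not to $\mathbf{1}_{(-M,M)}$, and there is no approximate partition of unity. Your fallback --- ``rescaled copies of $\sigma$ serving as narrow bumps'' --- only works if $\phi:=\sigma-c$ behaves like a mollifier (e.g.\ has a sign, or at least nonzero integral); but $\phi$ is merely a nonzero element of $C_0(\R)$ and may be odd or oscillatory (e.g.\ $\sigma(t)=t/(1+t^2)$, for which $c_{\pm\infty}=0$), in which case a superposition $\sum_j f(x_{j-1})\,\phi(\lambda(x-x_j))$ does not converge to $f$ and a genuinely different argument (closer to the paper's duality route, or a Tauberian-type density statement for dilated translates of $\phi$ in $C_0(\R)$) is needed. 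Either restrict your proof explicitly to $c_{+\infty}\neq c_{-\infty}$ --- which covers every example in Example \ref{ex:activationfunctions} --- and handle the remaining case separately, or supply the missing argument for general nonzero $\phi\in C_0(\R)$.
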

\begin{proof}
First, we know that $\sigma$ is discriminatory for $C(K)$ with $K$ compact because it is bounded and non-constant (Proposition \ref{prop:SufficientCondDisc}). 

Next, we recall that $C_0(\R)$ is the subspace of continuous functions that vanishes at infinity. This space coincides with the closure in the supremum norm of the union of $C([-\ell,\ell])$ with $\ell \in \N$, viewed as subspaces of $L^\infty(\R)$. Thus, $\sigma$ is also discriminatory for $C_0(\R)$, since there are continuous functions vanishing at infinity in the subspace $H_\sigma$, due to the fact that $\sigma$ is sigmoidal. 

Therefore, it is dense and we can find $\forall \varepsilon>0$ a function $g_\varepsilon\in H_\sigma$ such that 
\begin{equation}
\ds \sup_{x\in\R}|f(x)-g_\varepsilon(x)|<\varepsilon \, .
\end{equation}
\end{proof}
The following result takes a further step in the approximation of continuous functions, since the domain is now the whole $\mathbb{R}^d$ instead of just a compact $K$. It has the same spirit than \cite[Theorem 2]{park1991universal}, although the latter result concerns radial activation functions and we have considered a different structure slicing with hyperplanes.  Here we check that this result also holds in our setting.

\begin{theorem}\label{thm:UAC(Rn)}
Let $\sigma$ be continuous and discriminatory for $C(K)$ for every $K\subset\R^d$ compact. Given $\varepsilon>0$ and $f\in C(\R^d)$, there is a function $g \in H_\sigma$ such that $d(f,g)<\varepsilon$, where the distance is defined in the following way:
$$
d(f,g):=\sum_{k=1}^\infty 2^{-k}\frac{\|(f-g)\id_{[-k,k]^d}\|_\infty}{1+\|(f-g)\id_{[-k,k]^d}\|_\infty} \, ,
$$
where $[-k,k]^d$ is the centered $d$-dimensional cube of side $2k$.
\end{theorem}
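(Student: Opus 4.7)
The plan is to reduce the approximation problem on $\R^d$ to a finite family of approximation problems on compacts, each of which can be handled by Theorem \ref{thm:UAC(K)}, and then use the tail of the geometric series in the definition of $d$ to absorb the error in the regions where we have no control.

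First I would observe two elementary features of the metric $d$: every summand is bounded above by $2^{-k}$ (since $t/(1+t)\leq 1$), so for any $N\in\N$ the tail satisfies
$$
\sum_{k=N+1}^\infty 2^{-k}\frac{\|(f-g)\id_{[-k,k]^d}\|_\infty}{1+\|(f-g)\id_{[-k,k]^d}\|_\infty}\leq \sum_{k=N+1}^\infty 2^{-k}=2^{-N};
$$
and the map $t\mapsto t/(1+t)$ is non-decreasing and bounded by $t$, so each summand is controlled by $2^{-k}\|(f-g)\id_{[-k,k]^d}\|_\infty$. Given $\varepsilon>0$, I would then pick $N\in \N$ such that $2^{-N}<\varepsilon/2$, so that the tail past $N$ contributes at most $\varepsilon/2$ regardless of what $g$ does far from the origin.

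The key step is to apply Theorem \ref{thm:UAC(K)} to the compact set $K_N:=[-N,N]^d$ and to the restriction $f|_{K_N}\in C(K_N)$: since $\sigma$ is continuous and discriminatory for $C(K_N)$, there exists $g\in H_\sigma$ such that $\|(f-g)\id_{K_N}\|_\infty<\varepsilon/2$. Because $[-k,k]^d\subseteq K_N$ for every $k\leq N$, the same bound transfers to all smaller cubes, giving
$$
\sum_{k=1}^N 2^{-k}\frac{\|(f-g)\id_{[-k,k]^d}\|_\infty}{1+\|(f-g)\id_{[-k,k]^d}\|_\infty}\leq \frac{\varepsilon}{2}\sum_{k=1}^N 2^{-k}\leq \frac{\varepsilon}{2}.
$$
Combining the head and the tail yields $d(f,g)<\varepsilon$, as required.

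There is no real obstacle: the only subtlety is conceptual, namely that the approximation produced by Theorem \ref{thm:UAC(K)} is defined on all of $\R^d$ (the functions in $H_\sigma$ are globally defined affine-ridge expansions) and is only \emph{controlled} on the chosen compact; the metric $d$ is designed precisely so that loss of control outside a large cube is harmless. One small formality worth recording is that Theorem \ref{thm:UAC(K)} is stated for finite sums $\sum_j \alpha_j\sigma(w_j\cdot x+b_j)$ viewed as elements of $C(K_N)$, but each such sum is by definition a member of $H_\sigma$ on $\R^d$, so no extension argument is needed.
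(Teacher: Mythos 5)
Your proof is correct and follows essentially the same route as the paper, which omits the proof of this theorem precisely because it is the verbatim analogue (with $\|\cdot\|_\infty$ in place of $\|\cdot\|_1$) of the argument given for Theorem \ref{thm:UALloc}: truncate the series at a level $N$ with $2^{-N}<\varepsilon/2$, apply Theorem \ref{thm:UAC(K)} on the cube $[-N,N]^d$, and absorb the tail using the uniform bound $t/(1+t)\leq 1$. No gaps.
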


We omit the proof of this result, as it is completely analogous to that of Theorem \ref{thm:UALloc} for locally integrable functions. However, we remark a couple of facts concerning this result.

\begin{remark}\label{rem:C(Rn)Rectifier}
The previous theorem also holds for $\sigma$ the rectifier. Moreover, as we can see in the proof of Theorem \ref{thm:UALloc}, we only use the  assumption of continuity for $f$ to justify that it is bounded over compacts. Then, we could have stated the theorem above for $f\in L^\infty_\text{loc}(\mathbb{R}^d)$.
\end{remark}

\subsection{Negative results}
Next, we explore the opposite direction, i.e. negative results of universal approximation. First, we show that the following is a necessary condition for a space to satisfy the UA.

\begin{lemma}\label{lem:SeparabilityNecces}
If a metric space $(X,d)$ of functions can be universally approximated by finite sums of the form
$$
\displaystyle \sum_{j=1}^M \alpha_j \, \sigma(w_j\cdot x+b_j) \, ,
$$
where $\sigma$ is continuous and sigmoidal, then $(X,d)$ is separable.
\end{lemma}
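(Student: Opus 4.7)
The plan is to construct a countable dense subset of $X$, which will establish separability directly from the definition.

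Let $S$ denote the collection of all finite sums $\sum_{j=1}^M \alpha_j\,\sigma(w_j\cdot x + b_j)$ with $M\in\mathbb{N}$, $\alpha_j,b_j\in\mathbb{Q}$, and $w_j\in\mathbb{Q}^d$. As a countable union of countable sets, $S$ is countable. Given $f\in X$ and $\varepsilon>0$, the UA hypothesis provides some $g=\sum_{j=1}^M \alpha_j\,\sigma(w_j\cdot x + b_j)\in H_\sigma$ (with real coefficients) satisfying $d(f,g)<\varepsilon/2$. By the triangle inequality, the proof reduces to producing $g'\in S$ with $d(g,g')<\varepsilon/2$, so that $d(f,g')<\varepsilon$.

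To build $g'$, I would fix $M$ and choose, for each $j=1,\dots,M$, rational sequences $\alpha_j^{(k)}\to\alpha_j$, $w_j^{(k)}\to w_j$, $b_j^{(k)}\to b_j$, and set $g_k=\sum_{j=1}^M \alpha_j^{(k)}\,\sigma(w_j^{(k)}\cdot x + b_j^{(k)})\in S$. Continuity of $\sigma$ yields $g_k(x)\to g(x)$ pointwise for every $x$, while the sigmoidal hypothesis supplies the uniform bound $|g_k(x)|\leq \bigl(\sup_k\sum_j|\alpha_j^{(k)}|\bigr)\,\|\sigma\|_\infty<+\infty$.

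The main obstacle is the passage from this pointwise, uniformly bounded convergence of $g_k$ to $g$ to convergence in the metric $d$. For the function spaces that motivate the lemma, namely $C(K)$ with the uniform norm, $L^p$ spaces, the locally integrable space of Theorem \ref{thm:UAC(Rn)}, or the variable Lebesgue spaces $\Lp$ with bounded exponent that are the principal concern of the paper, this step is handled by a dominated-convergence argument or its modular analogue, and it is precisely this compatibility between $d$ and bounded pointwise convergence on which the lemma ultimately rests. Once $d(g,g_k)\to 0$ is established, taking $k$ large gives $g':=g_k\in S$ with $d(f,g')<\varepsilon$, so $S$ is a countable dense subset of $X$ and separability follows.
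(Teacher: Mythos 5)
Your construction is the same as the paper's: the countable set $S$ of networks with rational parameters $\alpha_j, b_j \in \mathbb{Q}$, $w_j \in \mathbb{Q}^d$ is the proposed dense subset, and the whole question is whether $S$ is dense in $H_\sigma$ with respect to $d$. The only divergence is in how that last step is closed. You defer it to a dominated-convergence (or modular) argument tailored to each concrete target space, starting from pointwise, uniformly bounded convergence $g_k \to g$; the paper instead closes it in one line by invoking the uniform continuity of $\sigma$ (a continuous function with finite limits at $\pm\infty$ is automatically uniformly continuous on $\mathbb{R}$), which upgrades your pointwise convergence to uniform convergence on compact sets --- and, in one dimension with $w_j \neq 0$, to uniform convergence on all of $\mathbb{R}$, since outside a fixed compact both $w_j^{(k)} x + b_j^{(k)}$ and $w_j x + b_j$ lie in the region where $\sigma$ is within $\varepsilon$ of its asymptotic values. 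Your explicit acknowledgment that the argument ultimately rests on the compatibility of $d$ with this mode of convergence is the more honest reading: as stated for an arbitrary metric space of functions the lemma needs exactly that tacit hypothesis (the paper's proof assumes it silently under the phrase ``uniform continuity of $\sigma$''), and it is what must be verified when the lemma is applied, e.g.\ to $\Lp(\Omega)$ in Corollary \ref{cor:characterization}. So: same approach, with your version making visible the one step the paper compresses.
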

\begin{proof}
The proof easily follows from the fact that $\mathbb{Q}$ is dense in $\mathbb{R}$. Indeed, due to the uniform continuity of $\sigma$, the countable subspace consisting of the finite sums 
$$
\displaystyle \sum_{j=1}^M \alpha_j \, \sigma(w_j\cdot x+b_j) \, ,
$$
where the variables $\alpha_j, b_j$ and the components of the vector $w_j$ are taken to be rational numbers is a dense subspace.
\end{proof}

Since every sigmoidal continuous activation function is uniformly continuous, Lemma \ref{lem:SeparabilityNecces} states that the size of the function space is an important fact to consider in the problem of UA. Furthermore, it can be used to show negative results of UA, such as the following example:

\begin{ex}\label{ex:LinfnotUA}
$(L^\infty(\R^d),\|\cdot\|_\infty)$ is a non-separable space. Therefore, UA fails for continuous sigmoidal activation functions. Moreover, it even fails for continuous activation functions. 

Indeed, if $\sigma$ is continuous, every finite sum of the form
$$
\displaystyle g(x)=\sum_{j=1}^M \alpha_j \, \sigma(w_j\cdot x+b_j)
$$
is continuous. The Heaviside step function  belongs to $L^\infty(\R)$ and has a jump discontinuity at $x=0$ of size $1$. Therefore, at $x=0$ the approximated continuous function $g$ must be simultaneously close to the value $0$ and $1$. Thus, using the continuity of $g$ we can see easily that
$$
\ds \|f-g\|_\infty:=\sup_{x\in \R}\left|h(x)- \sum_{j=1}^M \alpha_j \, \sigma(w_j\cdot x+b_j)\right|\geq \frac{1}{3} \, ,
$$
concluding thus the proof that for $(L^\infty(\R^d),\|\cdot\|_\infty)$ UA fails for continuous activation functions. 

\end{ex}

\subsection{Locally integrable functions}\label{subsec:locallyint}

The space of locally integrable functions, $L^1_{loc}(\R^d)$, is the largest space that we deal with in this manuscript. It includes as special cases the previous function spaces. However, this space lacks an associated norm. One can solve this problem by constructing a distance in a similar way that we have already done for $C(\R^d)$ in Theorem \ref{thm:UAC(Rn)}.

\begin{theorem}\label{thm:UALloc}
Given $\sigma$ discriminatory for $C(K)$ for every $K\subset \R^d$ compact, $\varepsilon>0$ and $f\in L^1_{loc}(\R^d)$, there is a function $g$ of the form:
$$
g(x) = \displaystyle \sum_{j=1}^M \alpha_j \, \sigma(w_j\cdot x+b_j) \, ,
$$
for every $x \in \mathbb{R}^d$, such that $d(f,g)<\varepsilon$, where the distance is defined in the following way:
$$
d(f,g):=\sum_{k=1}^\infty 2^{-k}\frac{\|(f-g)\id_{[-k,k]^d}\|_1}{1+\|(f-g)\id_{[-k,k]^d}\|_1} \, ,
$$
where $[-k,k]^d$ is the centered $n$-dimensional cube of side $2k$.
\end{theorem}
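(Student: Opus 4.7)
The plan is to reduce the estimate of $d(f,g)$ to a single $L^1$ approximation on a sufficiently large cube, and then to promote the discriminatory property of $\sigma$ from the ``no nontrivial annihilating measure'' form into density in $L^1$ via Hahn--Banach, in the spirit of Cybenko's original argument for $C(K)$.

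First I would exploit the structure of the metric. Since each summand is at most $2^{-k}$, I choose $N\in\N$ with $\sum_{k>N} 2^{-k}<\varepsilon/2$, so the tail is automatically controlled regardless of $g$. For $k\le N$, using $[-k,k]^d\subseteq[-N,N]^d$, monotonicity of $t\mapsto t/(1+t)$ on $[0,\infty)$, and $\sum_{k=1}^{N} 2^{-k}\le 1$, I can bound
\[
\sum_{k=1}^N 2^{-k}\,\frac{\|(f-g)\id_{[-k,k]^d}\|_1}{1+\|(f-g)\id_{[-k,k]^d}\|_1}\le \|(f-g)\id_{[-N,N]^d}\|_1.
\]
Hence it is enough to find $g\in H_\sigma$ with $\|f-g\|_{L^1([-N,N]^d)}<\varepsilon/2$, where $K:=[-N,N]^d$ is compact and $f|_K\in L^1(K)$ by local integrability of $f$.

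Next I would upgrade the discriminatory hypothesis into density of $H_\sigma|_K$ in $L^1(K)$. Assuming (as is needed for $H_\sigma$ to sit inside $L^1(K)$ in the first place) that $\sigma$ is locally bounded and measurable, every $x\mapsto\sigma(w\cdot x+b)$ lies in $L^\infty(K)\subseteq L^1(K)$. By Hahn--Banach and the identification $L^1(K)^*=L^\infty(K)$, it suffices to show that any $\phi\in L^\infty(K)$ annihilating $H_\sigma|_K$ is zero almost everywhere. Given such $\phi$, the signed measure $\mathrm d\mu:=\phi\,\mathrm dx$ is finite and, being absolutely continuous with respect to Lebesgue measure on the compact $K$, is regular Borel; moreover the annihilation condition reads $\int_K \sigma(w\cdot x+b)\,\mathrm d\mu(x)=0$ for every $w\in\R^d$ and $b\in\R$. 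The discriminatory hypothesis then forces $\mu\equiv 0$, so $\phi=0$ a.e., and density in $L^1(K)$ follows.

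Density produces the desired $g\in H_\sigma$ with $\|f-g\|_{L^1(K)}<\varepsilon/2$, and combining with the tail estimate yields $d(f,g)<\varepsilon$. The only genuinely subtle step I anticipate is the Hahn--Banach passage from the measure-theoretic discriminatory condition to density in $L^1(K)$ instead of $C(K)$, which is what the classical Cybenko proof produces directly; this is where I would take care, although once the Radon--Nikodym correspondence $\phi\leftrightarrow\mathrm d\mu=\phi\,\mathrm dx$ is in place it reduces to a routine duality argument. Everything else is a bookkeeping exercise using the two inequalities $t/(1+t)\le 1$ and $t/(1+t)\le t$ that are built into the definition of $d$.
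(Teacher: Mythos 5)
Your proof is correct, and the first half coincides with the paper's argument: both of you truncate the series by choosing $N$ (the paper's $m$) so that the tail $\sum_{k>N}2^{-k}$ is below $\varepsilon/2$, and both bound the head of the sum by $\|(f-g)\id_{[-N,N]^d}\|_1$ using $t/(1+t)\le t$ and $\sum_k 2^{-k}\le 1$, reducing everything to a single $L^1$ estimate on the cube $K=[-N,N]^d$. Where you diverge is in how that estimate is obtained. The paper invokes the $C(K)$ universal approximation theorem to get a \emph{uniform} approximant and then uses the crude bound $\|(f-g)\id_K\|_1\le \|(f-g)\id_K\|_\infty\,|K|$; as written, this step requires $f$ to be continuous (the paper's proof literally fixes ``$f$ continuous'', and Remark \ref{rem:C(Rn)Rectifier} concedes that boundedness of $f$ on compacts is what is really used), so for a genuine $f\in L^1_{loc}$ the paper's proof is incomplete without an extra appeal to the density of $C(K)$ in $L^1(K)$. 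Your route instead establishes density of $H_\sigma|_K$ directly in $L^1(K)$ by Hahn--Banach: an annihilating $\phi\in L^\infty(K)=L^1(K)^*$ yields the finite, absolutely continuous (hence regular Borel) measure $\phi\,\mathrm{d}x$, which the discriminatory-for-$C(K)$ hypothesis kills. This buys you the theorem for arbitrary locally integrable $f$ with no intermediate continuous approximant, at the price of needing $\sigma$ locally bounded and measurable so that $H_\sigma|_K\subseteq L^1(K)$ — an assumption you correctly flag and which the paper also needs implicitly for the discriminatory condition to even make sense. Both arguments are sound where they apply; yours is the one that actually covers the stated generality of the theorem.
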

\begin{proof}
The key idea of the proof relies on the fact that, due to the construction of the distance, the behaviour away from  compact sets of the form $[-k,k]^d$ has very few impact in the computation of the distance, because of the multiplicative factor $2^{-k}$. Therefore, given $\varepsilon>0$, there is a large enough $k$ so that we can restrict to approximating the function $f$ on $[-k,k]^d$. 

 Indeed, fix $f:\mathbb{R}^d \rightarrow \mathbb{R}$ continuous and $\varepsilon > 0$. Take $m \in \mathbb{N}$ such that
\begin{equation*}
\frac{1}{2^m} < \frac{\varepsilon}{2} \, .
\end{equation*}

Hence, this clearly implies that 
\begin{equation*}
\un{k=m+1}{\ov{\infty}{\sum}} \, \frac{1}{2^k} < \frac{\varepsilon}{2} \, .
\end{equation*}

Now, by virtue of Theorem \ref{thm:UAC(K)}, there exists a function $g$ of the form $g(x) = \un{j=1}{\ov{\ell}{\sum}} \alpha_j \, \sigma (w_j \cdot x + b_j) $ for every $x \in \mathbb{R}^d$
such that
\begin{equation*}
\norm{(f-g)\id_{[-m,m]^d}}_1\leq \norm{(f-g)\id_{[-m,m]^d}}_\infty (2m)^d < \frac{\varepsilon}{2} \, .
\end{equation*}

Therefore, it is clear that
\begin{align*}
d(f,g) & = \sum_{k=1}^\infty 2^{-k}\frac{\|(f-g)\id_{[-k,k]^d}\|_1}{1+\|(f-g)\id_{[-k,k]^d}\|_1} \\
& =  \sum_{k=1}^{m} 2^{-k}\frac{\|(f-g)\id_{[-k,k]^d}\|_1}{1+\|(f-g)\id_{[-k,k]^d}\|_1} + \sum_{k=m+1}^{\infty} 2^{-k}\frac{\|(f-g) \id_{[-k,k]^d}\|_1}{1+\|(f-g) \id_{[-k,k]^d}\|_1} \\
& \leq \|(f-g)\id_{[-m,m]^d}\|_1 + \sum_{k=m+1}^{\infty} 2^{-k} \\
& < \frac{\varepsilon}{2} + \frac{\varepsilon}{2} = \varepsilon \, ,
\end{align*}
where we are using in the second line the fact that $f-g$ is locally integrable, so that $\|(f-g) \id_{[-k,k]^d}\|_1$ is finite for every $k \in \mathbb{N}$ and thus 
\begin{equation*}
\frac{\|(f-g) \id_{[-k,k]^d}\|_1}{1+\|(f-g) \id_{[-k,k]^d}\|_1} \leq 1 \; \; \; \text{ for every } k \in \mathbb{N} \, .
\end{equation*}

\end{proof}

\begin{remark}\label{rem:L1locMasSigma}
As in the case of Theorem \ref{thm:UAC(Rn)}, a result in a similar spirit appeared in \cite{park1991universal}, although the latter result concerned radial activation functions. 

\end{remark}

\section{Approximation in variable Lebesgue spaces}\label{sec:approxvarLp}

In the previous section, we have collected some results concerning the universal approximation property with neural networks in certain function spaces. However, to the best of our knowledge, there is no result about universal approximation for variable Lebesgue spaces. 

In this section, we present the main results of this paper. We have split them into three different cases, depending on the exponent function of the space and its domain. First, we show some results of universal approximation whenever the exponent function is bounded, subsequently proving that, under certain conditions on the activation function, this is the only case for which a universal approximation is possible. Later, we shift towards the unbounded exponent function setting, starting with the case in which the domain is discrete and subsequently lifting these results to a general domain case.

\subsection{Case I: Bounded exponent function}\label{subsec:bounded}

In this section, we discuss the results of approximation for variable Lebesgue spaces obtained when the exponent function is bounded, i.e. whenever $p: \Omega \rightarrow [1, + \infty)$ for $\Omega \subseteq \mathbb{R}^d$ verifies
\begin{equation*}
\underset{x \in \Omega}{\text{ess sup}} \, p(x) < + \infty \, .
\end{equation*}

In the case that the exponent function is bounded, the variable Lebesgue space $\Lp$ is separable. Furthermore, smooth functions with compact support are dense. Thus, we can present a first version of universal approximation result for bounded variable Lebesgue spaces relying on the previous fact and Proposition \ref{prop:EquivalenceCondDisc}.

\begin{theorem}\label{thm:UALp}
Let $\sigma: \mathbb{R} \rightarrow \mathbb{R}$ be locally bounded, continuous almost everywhere and different from an algebraic polynomial. Then, truncated finite sums of the form
$$
g(x)=\begin{cases}
\displaystyle \sum_{j=1}^M \alpha_j \, \sigma(w_j\cdot x+b_j) & x\in K \, , \\
0 & x\in \R^d\setminus K \, ,
\end{cases}
$$
 with $K$ compact are dense in $\Lp(\R^d)$ with bounded exponent $p$.
\end{theorem}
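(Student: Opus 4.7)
The plan is to prove density via a Hahn--Banach duality argument, reducing first to the compactly supported case and then appealing to the discriminatory property of $\sigma$ on compacts.

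First, I would invoke Proposition~\ref{prop:LpVariableDenseCompact}: since $p$ is bounded, the subspace $\Lp_c$ of compactly supported functions is dense in $\Lp(\R^d)$. Given $f \in \Lp(\R^d)$ and $\varepsilon > 0$, it therefore suffices to approximate within $\varepsilon/2$ a function $\tilde f$ whose support lies in some compact $K \subset \R^d$. The task then reduces to showing that, for each such compact $K$, the set $\{ g \cdot \id_K : g \in H_\sigma \}$ is dense in the closed subspace of $\Lp(\R^d)$ consisting of functions supported in $K$, which is naturally identified (via extension by zero) with $\Lp(K)$.

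Next I would apply the Hahn--Banach theorem: it suffices to check that every bounded linear functional on $\Lp(K)$ vanishing on this set is identically zero. By Proposition~\ref{prop:LpVariableDual}, since $p|_K$ is bounded, any such functional has the form $T(\varphi) = \int_K \varphi(x) h(x) \, dx$ for some $h \in \Lq(K)$, with $q$ the pointwise Hölder conjugate of $p$. Proposition~\ref{prop:variableLpContCompact} applied on the compact $K$ (whose Lebesgue measure is finite) gives the continuous embedding $\Lq(K) \hookrightarrow L^1(K)$, so $h \in L^1(K)$ and $\mu := h \, dx$ defines a finite, signed, regular Borel measure on $K$. The hypothesis that $T$ annihilates the truncated neural networks supported in $K$ then reads
$$
\int_K \sigma(w \cdot x + b) \, d\mu(x) = 0 \quad \text{for all } w \in \R^d, \ b \in \R.
$$
By Proposition~\ref{prop:EquivalenceCondDisc}, the hypotheses on $\sigma$ (locally bounded, continuous almost everywhere, and not equal almost everywhere to an algebraic polynomial) ensure that $\sigma$ is discriminatory for $C(K)$, so $\mu \equiv 0$, whence $h = 0$ almost everywhere on $K$ and $T \equiv 0$. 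Combined with the initial reduction, this yields density of the truncated neural networks in $\Lp(\R^d)$.

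The main obstacle I foresee is purely technical: verifying the embedding $\Lq(K) \subseteq L^1(K)$ in the corner case where $p(x) = 1$ on a set of positive measure (so that $q$ takes the value $+\infty$ there), and making the identification between the subspace of $\Lp(\R^d)$ of functions supported in $K$ and $\Lp(K)$ precise enough to legitimately transfer the Hahn--Banach conclusion back to $\Lp(\R^d)$. Neither issue is serious; both can be handled directly from the Luxemburg norm definition together with the finiteness of $|K|$. Everything else is a clean assembly of tools already collected in the preliminaries, the key substantive step being Proposition~\ref{prop:EquivalenceCondDisc}, which upgrades the weak regularity assumption on $\sigma$ to the discriminatory condition needed by Hahn--Banach.
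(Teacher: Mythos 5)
Your proof is correct, but it follows a different route from the one the paper uses for this particular theorem. The paper's own proof of Theorem~\ref{thm:UALp} first replaces $f$ by a continuous compactly supported approximant $f_1$ (using the density of $C^\infty_c$ from Proposition~\ref{prop:LpVariableDenseCompact}), then invokes the uniform approximation theorem on $C(K)$ (Theorem~\ref{thm:UAC(K)}) and converts the sup-norm estimate on $K$ into an $\Lp$ estimate using the finiteness of $|K|$. You instead reduce only to $\Lp_c$ and run the Hahn--Banach duality argument directly in $\Lp(K)$: a vanishing functional is represented by $h\in \Lq(K)$, the embedding $\Lq(K)\hookrightarrow L^1(K)$ turns $h\,dx$ into a finite signed regular Borel measure, and Proposition~\ref{prop:EquivalenceCondDisc} kills it. This is essentially the paper's proof of the \emph{more general} Theorem~\ref{thm:UALpVariable} combined with the content of Lemma~\ref{lem:RelationDiscVar} (discriminatory for $C(K)$ implies discriminatory for $\Lp(K)$), specialized to the present hypotheses. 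What each approach buys: the paper's argument is shorter given that Theorem~\ref{thm:UAC(K)} is already in hand, while yours avoids the intermediate continuous approximant and exposes the duality mechanism that the paper later reuses verbatim for Theorem~\ref{thm:UALpVariable}. Two small points you should make explicit: (i) since $\sigma$ is locally bounded, $\sigma(w\cdot x+b)$ is bounded on the compact $K$ and hence lies in $\Lp(K)$, which is needed before you may apply the functional $T$ to it; (ii) as you note, Proposition~\ref{prop:variableLpContCompact} does not literally cover the case where $q$ is unbounded (i.e.\ where $\inf p=1$), but the variable-exponent H\"older inequality $\int_K|h|\leq C\,\|h\|_{q(\cdot)}\,\|\id_K\|_{p(\cdot)}<\infty$ closes that gap directly. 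Neither issue affects the validity of the argument.
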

\begin{proof}
Fix $\varepsilon>0$. Since $C^\infty_c(\R^d)$ is dense in $\Lp(\R^d)$ by Proposition \ref{prop:LpVariableDenseCompact}, we can find $f_1\in C(\R^d)$ with compact support $K$, such that $\|f- f_1\|_{\Lp(\R^d)}<\varepsilon/2$.

Using Theorem \ref{thm:UAC(K)}, we can find $g_\varepsilon\in H_\sigma$ such that it is truncated to be zero outside $K$ and
$$
\ds \sup_{x\in K}|f_1(x)-g_\varepsilon(x)|<\frac{\varepsilon}{2 |K|} \, .
$$
Therefore, 
$$
\|f-g_\varepsilon\|_{\Lp(\R^d)}\leq \|f-f_1\|_{\Lp(\R^d)}+\|f_1-g_\varepsilon\|_{\Lp(\R^d)}<\frac{\varepsilon}{2}+|K|\frac{\varepsilon}{2 |K|}=\varepsilon.
$$
\end{proof}

Furthermore, when the exponent function is bounded, the dual of variable Lebesgue spaces is fully characterized (see Proposition \ref{prop:LpVariableDual}) and it coincides with $\Lq$, where $q$ is point-wise the Hölder conjugate of $p$, i.e. $\frac{1}{p(x)}+\frac{1}{q(x)}=1$ for every $x \in \Omega$. By virtue of this result, and analogously to the previous notions of discriminatory activation functions, we can give the following definition:
\begin{defi}
Let $\sigma : \mathbb{R} \rightarrow \mathbb{R}$ be an activation function and $K\subset \R^d$ compact. For a bounded exponent function $p: \Omega \rightarrow [1, + \infty)$, for $\Omega \subseteq \mathbb{R}^d$, we say that $\sigma $ is \emph{discriminatory for $\Lp(K)$} if for any $h \in \Lq(K)$, where $q$ is pointwise the Hölder conjugate of $p$ (i.e. $1/p(x)+1/q(x)=1$ for every $x \in \Omega$), whenever
\begin{equation*}
\int_K \sigma( w \cdot  x + b) \, h(x) \, d x = 0
\end{equation*}
for all $ w \in \R^d$ and $b \in \mathbb{R}$ implies that $h = 0$ almost everywhere.
\end{defi}

 As mentioned above, this is indeed a reasonable definition due to the fact that $\Lq$ can be identified with the dual of $\Lp$. Now, considering this definition, we can state and prove the following result in this setting.

\begin{theorem}\label{thm:UALpVariable}
Let $\Omega\subseteq \R^d$, $p: \Omega \rightarrow [1, + \infty)$ a bounded exponent function and $\sigma: \mathbb{R} \rightarrow \mathbb{R}$  discriminatory for $\Lp(K)$ for every compact $K\subset \Omega$. Then, truncated finite sums of the form
$$
g(x)=\begin{cases}
\displaystyle \sum_{j=1}^M \alpha_j \, \sigma(w_j\cdot x+b_j) & x\in K \, , \\
0 & x\in \Omega \setminus K \, ,
\end{cases}
$$
 with $K$ compact  are dense in $\Lp(\Omega)$.
\end{theorem}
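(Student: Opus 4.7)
The plan is to follow the same two-stage reduction used for Theorem \ref{thm:UALp} (first approximate by a compactly supported function, then by a neural network on its support), but replace the direct Sup-norm approximation from Theorem \ref{thm:UAC(K)} with a Hahn--Banach argument that makes genuine use of the discriminatory hypothesis for $\Lp(K)$.

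Fix $f\in \Lp(\Omega)$ and $\varepsilon>0$. Since $p$ is bounded, Proposition \ref{prop:LpVariableDenseCompact} gives some $f_1\in \Lp_c(\Omega)$, with compact support $K\subset \Omega$, such that $\|f-f_1\|_{p(\cdot)}<\varepsilon/2$. The problem is therefore reduced to showing that the subspace
$$
S_K := \Bigl\{\, h|_K : h(x)=\sum_{j=1}^M \alpha_j\,\sigma(w_j\cdot x+b_j),\ M\in\N,\ \alpha_j,b_j\in\R,\ w_j\in\R^d\,\Bigr\}
$$
is dense in $\Lp(K)$, because then any $\varepsilon/2$-approximation $g_\varepsilon\in S_K$ of $f_1|_K$ extends by $0$ outside $K$ to a truncated finite sum of the prescribed form, and $\|f_1-\widetilde g_\varepsilon\|_{\Lp(\Omega)}=\|f_1|_K-g_\varepsilon\|_{\Lp(K)}<\varepsilon/2$, whence the triangle inequality closes the argument.

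To establish density of $S_K$ in $\Lp(K)$ I would invoke the Hahn--Banach theorem: it suffices to show that every continuous linear functional on $\Lp(K)$ that vanishes on $S_K$ is identically zero. Since $p$ is bounded, Proposition \ref{prop:LpVariableDual} identifies $(\Lp(K))^*$ with $\Lq(K)$ via
$$
T(g)=\int_K g(x)\,h(x)\,\mathrm{d}x,\qquad h\in \Lq(K),
$$
where $q$ is the pointwise Hölder conjugate of $p$. If $T$ vanishes on $S_K$, then in particular $\int_K \sigma(w\cdot x+b)\,h(x)\,\mathrm{d}x=0$ for all $w\in\R^d$ and $b\in\R$. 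The hypothesis that $\sigma$ is discriminatory for $\Lp(K)$ forces $h=0$ almost everywhere, hence $T\equiv 0$, and density follows.

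The only real subtlety is bookkeeping: making sure the compact support of $f_1$ is used consistently so that the truncation of the approximating sum does not increase the $\Lp(\Omega)$-norm of the error. This is immediate because $f_1$ vanishes outside $K$, so extending $g_\varepsilon\in S_K$ by zero contributes nothing to $\|f_1-\widetilde g_\varepsilon\|_{\Lp(\Omega)}$ outside $K$. The substantive content is therefore the duality identification plus the discriminatory hypothesis, both of which are available precisely because $p$ is bounded; no regularity (continuity, etc.) of $\sigma$ needs to be invoked beyond what is encoded in the word ``discriminatory''.
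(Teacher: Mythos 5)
Your proposal is correct and follows essentially the same route as the paper's proof: reduce to a compactly supported approximant via Proposition \ref{prop:LpVariableDenseCompact}, then prove density of the restricted sums in $\Lp(K)$ by combining the Hahn--Banach theorem with the duality identification of Proposition \ref{prop:LpVariableDual} and the discriminatory hypothesis. Your write-up of the final step (the functional is represented by $h\in\Lq(K)$, which the discriminatory condition forces to vanish) is in fact cleaner than the paper's own phrasing of that point.
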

\begin{proof}
Given $f\in \Lp(\Omega)$ and $\varepsilon>0$, we apply Proposition \ref{prop:LpVariableDenseCompact} to find a compact $K$  where 
$$
\|f-f \id_K\|_{L^p(\cdot)(\Omega)} < \frac{\varepsilon}{2} \, .
$$

Then, we can show that there is a $g$ of the form $\displaystyle \sum_{j=1}^d \alpha_j \, \sigma(w_j\cdot x+b_j)$ such that 
$$
\displaystyle \left\|f- \sum_{j=1}^M \alpha_j \, \sigma(w_j\cdot x+b_j) \right\|_{\Lp(K)}<\frac{\varepsilon}{2} \, ,
$$
finishing thus the proof. Indeed, if above were false, we would get a contradiction  as a consequence of Hahn-Banach theorem. By virtue of its version as a hyperplane separation theorem (see \cite[Theorem 3.5]{rudin1921analysis}),  we can find a nontrivial functional, $T$, which vanishes over the all the finite sums of the form
$$
\displaystyle \sum_{j=1}^M \alpha_j \, \sigma(w_j\cdot x+b_j) \, .
$$

From Proposition \ref{prop:LpVariableDual}, we know that the dual of $\Lp(K)$ is $\Lq(K)$ where $q$ is point-wise the Hölder conjugate of $p$ and
$$
T(f)=\int_K f(x) h(x) \mathrm{d}x \, .
$$

Therefore, we have obtained a relation between functionals, $T$, and elements of $\Lq(K)$, $h$.  Since $\sigma$ is discriminatory, and $h$ can in particular be taken to be $\sigma$ above (as $\sigma$ belongs to $\Lq(K)$), we conclude $T=0$, getting a contradiction with the nontriviality of $T$.
\end{proof}

A natural question that yields the previous result is when the activation function is discriminatory for variable Lebesgue spaces. We can hence prove the following relation between the properties of being discriminatory, which in particular yields the fact that Theorem \ref{thm:UALpVariable} is more general than Theorem \ref{thm:UALp}.

\begin{lemma}\label{lem:RelationDiscVar}
Given $K\subset \R^d$ compact and two bounded exponent functions $p_1$ and $p_2$ satisfying $p_1(x)\leq p_2(x)$ for all $x\in K$,
$$
\{\sigma \text{ disc. for } C(K)\} \subseteq \{\sigma \text{ disc. for } L^{p_2(\cdot)}(K)\}\subseteq \{\sigma \text{ disc. for } L^{p_1(\cdot)}(K)\} \, .
$$

In particular, non-constant, bounded activation functions $\sigma$ and the rectifier are discriminatory for $\Lp(K)$ (with $p$ bounded).
\end{lemma}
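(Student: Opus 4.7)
The plan is to handle the two set-inclusions independently, and then derive the ``in particular'' statements from the first inclusion combined with the known discriminatory properties of the relevant activation functions for $C(K)$ (Propositions \ref{prop:SufficientCondDisc} and \ref{prop:EquivalenceCondDisc}).

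For the first inclusion, I would assume $\sigma$ is discriminatory for $C(K)$ and pick an arbitrary $h\in L^{q_2(\cdot)}(K)$, where $q_2$ is the pointwise H\"older conjugate of $p_2$, satisfying $\int_K \sigma(w\cdot x+b)\,h(x)\,dx=0$ for every $w\in\R^d$ and $b\in\R$. The idea is to view $d\mu:=h\,dx$ as a finite signed Borel measure on $K$ and invoke the hypothesis. The crux is to check $h\in L^1(K)$: since $K$ has finite Lebesgue measure and $p_2$ is bounded, $\id_K\in L^{p_2(\cdot)}(K)$, and H\"older's inequality in variable Lebesgue spaces yields $\|h\|_1\leq C\,\|h\|_{q_2(\cdot)}\|\id_K\|_{p_2(\cdot)}<+\infty$. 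Then $\mu$ is a finite signed Borel measure with $\int_K \sigma(w\cdot x+b)\,d\mu=0$ for all $w,b$, and discrimination for $C(K)$ forces $\mu=0$, hence $h=0$ almost everywhere.

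For the second inclusion, the strategy is that shrinking the space of admissible test functions can only make the discriminatory condition easier. Since $p_1(x)\leq p_2(x)$ pointwise on $K$, the conjugates satisfy $q_2(x)\leq q_1(x)$, so Proposition \ref{prop:variableLpContCompact} gives $L^{q_1(\cdot)}(K)\subseteq L^{q_2(\cdot)}(K)$. Hence any $h\in L^{q_1(\cdot)}(K)$ annihilating every $\sigma(w\cdot x+b)$ is in particular an element of $L^{q_2(\cdot)}(K)$ with the same property, and the $L^{p_2(\cdot)}$-discrimination hypothesis forces $h=0$ a.e. For the final assertion, any bounded measurable non-constant $\sigma$ is discriminatory for $C(K)$ by Proposition \ref{prop:SufficientCondDisc}, while the rectifier is locally bounded, continuous and not a polynomial, hence discriminatory for $C(K)$ by Proposition \ref{prop:EquivalenceCondDisc}; in both cases the first inclusion immediately upgrades this to discrimination for $\Lp(K)$.

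The main subtlety I anticipate is that $q_2$ can take the value $+\infty$ precisely where $p_2=1$, so one cannot apply Proposition \ref{prop:variableLpContCompact} directly to conclude $L^{q_2(\cdot)}(K)\subseteq L^1(K)$ in the first step. Routing the argument through H\"older's inequality against $\id_K$, which lies in $L^{p_2(\cdot)}(K)$ because $K$ has finite measure and $p_2$ is bounded, is what lets this part go through cleanly and makes $h\,dx$ a genuine finite signed Borel measure to which the $C(K)$ hypothesis can be applied.
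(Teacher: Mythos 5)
Your proof is correct and follows essentially the same route as the paper's: duality together with the nesting of variable Lebesgue spaces on a compact set for the second inclusion, and viewing $h\,dx$ as a finite signed regular Borel measure in order to invoke the $C(K)$ hypothesis for the first. Your explicit H\"older argument showing $h\in L^1(K)$ (which handles the points where $q_2=+\infty$) is in fact more careful than the paper, which leaves that embedding implicit; the only other cosmetic difference is that you justify the rectifier via Proposition \ref{prop:EquivalenceCondDisc} rather than the two-ReLU reduction of Remark \ref{rem:ReLUisDisc}.
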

\begin{proof}
We have the following inclusion of variable Lebesgue space when the domain is compact: Whenever $p_1\leq p_2$, it holds that $L^{p_2(\cdot)}(K)\subseteq L^{p_1(\cdot)}(K)$ (Proposition \ref{prop:variableLpContCompact}). We recall that the dual of bounded variable Lebesgue space $\Lp$ is the variable Lebesgue space $\Lq$ where $q$ is pointwise the Hölder conjugate of $p$ (Proposition \ref{prop:LpVariableDual}). Therefore,
$$
L^{q_1(\cdot)}(K)\subseteq L^{q_2(\cdot)}(K)\, ,
$$
where $q_1$ and $q_2$ are the Hölder conjugates of $p_1$ and $p_2$, respectively. This proves that a function $\sigma$ that is discriminatory for $L^{p_2(\cdot)}(K)$ is also discriminatory for $L^{p_1(\cdot)}(K)$.

Moreover, since for every $h\in L^1(K)$, $h(x)\mathrm{d}x$ is a Radon measure, we have if $\sigma$ is discriminatory for $C(K)$, it is also discriminatory for $\Lp(K)$ with $p$ bounded.

Finally, using Proposition \ref{prop:SufficientCondDisc} and Remark \ref{rem:ReLUisDisc},  non-constant, bounded activation functions $\sigma$ and the rectifier are discriminatory for $\Lp(K)$ (with $p$ bounded).
\end{proof}

From Lemma \ref{lem:RelationDiscVar} and Proposition \ref{prop:SufficientCondDisc}, it easily follows that all the examples shown in Subsection \ref{subsec:neuralnetworks} are discriminatory for $\Lp(K)$. Therefore, we can use Theorem \ref{thm:UALpVariable} with any of the activation functions from Example \ref{ex:activationfunctions}.

We conclude this subsection showing the connection between the boundedness of the exponent function and the universal approximation property for $\Lp$ spaces.
\begin{cor}\label{cor:characterization}
Let $\Omega\subseteq \R^d$, $p: \Omega \rightarrow [1, + \infty)$ a exponent function and $\sigma: \mathbb{R} \rightarrow \mathbb{R}$  continuous and sigmoidal. Then, UA holds for $\Lp(\Omega)$ if, and only if, $p$ is bounded.
\end{cor}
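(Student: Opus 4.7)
The plan is to prove both directions by assembling results already established in the paper, with essentially no new work beyond a careful citation chain.

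For the sufficiency direction ($p$ bounded $\Rightarrow$ UA holds), I would invoke Theorem \ref{thm:UALpVariable}, which produces truncated-sum approximations in $\Lp(\Omega)$ provided that $\sigma$ is discriminatory for $\Lp(K)$ on every compact $K \subset \Omega$. To obtain this discriminatory hypothesis for our continuous sigmoidal $\sigma$, I would first observe that continuity together with the existence of finite limits $c_{\pm\infty}$ at $\pm\infty$ forces $\sigma$ to be bounded on all of $\R$ (continuous on any compact interval, and eventually within any neighborhood of the limits outside). Assuming $\sigma$ is also non-constant (which is the nontrivial case; a constant $\sigma$ yields $H_\sigma$ consisting of constants and UA fails trivially), Lemma \ref{lem:RelationDiscVar} then applies and gives that $\sigma$ is discriminatory for $\Lp(K)$ for every compact $K$, as required.

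For the necessity direction, I would argue by contrapositive: assume $p$ is unbounded and show UA fails. Proposition \ref{prop:LpVariableSeparable} characterizes separability of $\Lp(\Omega)$ precisely by boundedness of the exponent, so the unbounded case gives a non-separable metric space. On the other hand, Lemma \ref{lem:SeparabilityNecces} says that if UA holds in a metric space of functions with a continuous sigmoidal activation, then that space must be separable. Combining these two facts, UA cannot hold for $\Lp(\Omega)$ when $p$ is unbounded.

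I do not anticipate a real obstacle here; the corollary is essentially a bookkeeping statement that packages Theorem \ref{thm:UALpVariable}, Lemma \ref{lem:RelationDiscVar}, Proposition \ref{prop:LpVariableSeparable}, and Lemma \ref{lem:SeparabilityNecces} into a clean equivalence. The only subtlety to flag is the implicit assumption that a continuous sigmoidal $\sigma$ is bounded and non-constant, which I would address in a brief opening remark so the invocation of Lemma \ref{lem:RelationDiscVar} is unambiguous.
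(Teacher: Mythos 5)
Your proposal is correct and follows essentially the same route as the paper: Theorem \ref{thm:UALpVariable} combined with Lemma \ref{lem:RelationDiscVar} (and, behind it, Proposition \ref{prop:SufficientCondDisc}) for the bounded direction, and Proposition \ref{prop:LpVariableSeparable} together with Lemma \ref{lem:SeparabilityNecces} for the unbounded direction. Your explicit remark that a continuous sigmoidal $\sigma$ is automatically bounded, and your flagging of the degenerate constant case, are welcome clarifications but do not change the argument.
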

\begin{proof}
On the one hand, from Theorem \ref{thm:UALpVariable}, Proposition \ref{prop:SufficientCondDisc} and Lemma \ref{lem:RelationDiscVar} we deduce that, when $p$ is bounded, we have UA for $\Lp(\Omega)$. On the other hand, when $p$ is unbounded the space $\Lp(\Omega)$ lacks UA, due to Proposition \ref{prop:LpVariableSeparable} and Lemma \ref{lem:SeparabilityNecces}.
\end{proof}

\subsection{Case II: Unbounded exponent function, discrete case}\label{subsec:unboundeddiscrete}

After having dealt with the bounded case in the last section, we now shift towards the unbounded case. For that, it is better to show first the results that can be obtained in the discrete case, i.e. in variable sequence spaces. Even though variable sequence spaces are easier to describe than variable Lebesgue spaces, they are complex enough to show that it is impossible to achieve a universal approximation property.

More precisely, we consider the measure space $(\N, 2^{\N}, \mu)$, where $2^{\N}$ denotes all the subsets of the natural numbers and $\mu$ is the counting measure (it associates to every subset of the natural numbers its cardinality). As in the continuum case, other measures could be considered and the results would follow with minor modifications.

Let us recall the definition of a variable sequence space: Given an exponent function $p:\N\to [1,+\infty)$, consider the modular
$$
\rho_{p(\cdot)}(\{x(k)\}):=\sum_{j=1}^\infty |x(j)|^{p(j)} \, .
$$

Then, the norm is given by
$$
\|\{x(k)\}\|_{p(\cdot)}=\inf\left\{\lambda>0: \rho_{p(\cdot)}(\{x(k)\}/\lambda)\leq 1 \right\}.
$$

Let us recall that we are denoting the subspace of all sequences that can be obtained using an ANN with 1 hidden layer and activation function $\sigma$ by
$$
H_\sigma :=\left\{\{y(k)\}: y(k)=\sum_{j=1}^M  \alpha_j \, \sigma(w_j\cdot k+b_j)\right\} \, ,
$$
where $\alpha_j, w_j, b_j\in \R$ and $M\in \N$. Hereafter, we will assume that the activation function satisfies $\sigma \in L^\infty(\R)$ non-constant and sigmoidal (existence of the limits at $+\infty$ and $-\infty$). Therefore, $H_\sigma \subseteq \ell^\infty$. More specifically, since $\sigma$ is sigmoidal, $H_\sigma$ is a subspace of the space of convergent sequences, i.e. $H_\sigma \subseteq c$.

Before proceeding to the main results of this section, here we collect some results concerning variable sequence spaces and its relation with $\ell^\infty$ addressed in \cite{amenta2019variablelp}.
\begin{prop}
Let $p:\N\to [1,+\infty)$ be an exponent function. Then,
\begin{itemize}
\item $\ell^{p(\cdot)}\subseteq \ell^\infty$.
\item $\ell^{p(\cdot)} = \ell^\infty$ as vector spaces if, and only if, $\|\id_{\N}\|_{p(\cdot)}<+\infty$.
\end{itemize}
\end{prop}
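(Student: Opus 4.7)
The plan is to exploit the modular definition of the norm to read off an elementary pointwise bound, and then observe that a finite norm for the constant sequence $\id_{\N}$ is precisely what converts an $\ell^\infty$ bound into an $\ell^{p(\cdot)}$ bound.

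First I would handle the inclusion $\ell^{p(\cdot)}\subseteq \ell^\infty$. Given $x=\{x(k)\}\in \ell^{p(\cdot)}$, pick any $\lambda>\|x\|_{p(\cdot)}$, so that $\sum_{j\geq 1}(|x(j)|/\lambda)^{p(j)}\leq 1$. Each summand is nonnegative, so in particular $(|x(j)|/\lambda)^{p(j)}\leq 1$ for every $j$; since $p(j)\geq 1$, this forces $|x(j)|/\lambda\leq 1$, i.e.\ $|x(j)|\leq \lambda$. Taking the supremum in $j$ and the infimum in $\lambda$ yields $\|x\|_{\infty}\leq \|x\|_{p(\cdot)}$, which proves both the set-theoretic inclusion and the continuity of the embedding.

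For the characterization I would treat the two implications separately. The direction ($\Rightarrow$) is essentially a tautology: if $\ell^{p(\cdot)}=\ell^\infty$ as vector spaces, then the bounded sequence $\id_{\N}$ lies in $\ell^{p(\cdot)}$, so $\|\id_{\N}\|_{p(\cdot)}<+\infty$. For ($\Leftarrow$) I would use the homogeneity of $\|\cdot\|_{p(\cdot)}$ together with a monotonicity observation on the modular. Given $x\in \ell^\infty$ with $M:=\|x\|_\infty$, one has $|x(j)|\leq M$ for all $j$; hence for any $\lambda>0$,
\[
\sum_{j\geq 1}\left(\frac{|x(j)|}{\lambda}\right)^{p(j)}\leq \sum_{j\geq 1}\left(\frac{M}{\lambda}\right)^{p(j)}.
\]
Choosing $\lambda=M\mu$ with $\mu>\|\id_{\N}\|_{p(\cdot)}$, the right-hand side is $\sum_{j}(1/\mu)^{p(j)}=\rho_{p(\cdot)}(\id_\N/\mu)\leq 1$. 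Taking infimum gives $\|x\|_{p(\cdot)}\leq M\,\|\id_{\N}\|_{p(\cdot)}=\|x\|_\infty \|\id_{\N}\|_{p(\cdot)}$, so $x\in \ell^{p(\cdot)}$ and the reverse inclusion $\ell^\infty\subseteq \ell^{p(\cdot)}$ holds.

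I do not anticipate a real obstacle here; the only point that requires a tiny bit of care is that both steps rely on the pointwise inequality $p(j)\geq 1$ (used to pass between $(|x(j)|/\lambda)^{p(j)}\leq 1$ and $|x(j)|/\lambda\leq 1$) and on the fact that each summand in the modular is nonnegative. With these observations in place, the argument is essentially a direct unpacking of the definitions.
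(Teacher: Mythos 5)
Your proof is correct. The paper itself does not prove this proposition --- it is quoted from \cite{amenta2019variablelp} without argument --- but your two steps (the pointwise bound $\|x\|_\infty\leq\|x\|_{p(\cdot)}$ extracted from the modular, and the scaling estimate $\|x\|_{p(\cdot)}\leq\|x\|_\infty\,\|\id_{\N}\|_{p(\cdot)}$ for the converse inclusion) are exactly the standard argument, and all the monotonicity facts you invoke (the modular is decreasing in $\lambda$, so any $\lambda$ strictly above the norm satisfies $\rho_{p(\cdot)}(x/\lambda)\leq 1$) are sound.
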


The condition $\|\id_{\N}\|_{p(\cdot)}<+\infty$ is related to the divergence of the exponent function $p$, that is, we need that $p(k)\to +\infty$ ($k\to+\infty$) fast enough so that the series
$$
\sum_{j=1}^\infty s^{p(j)}
$$
converges for some $s>0$. For example, $p(k)=1+\log k$ or any other exponent function which diverges faster verifies the previous condition. However, there are other unbounded exponent functions which do not verify the previous condition, such as $p(k)=1+\log(1+\log k)$ or any other exponent function which diverges slower than this.

\begin{prop}\label{prop:ApproxVariableSequenceSp}
Let $p:\N\to [1,+\infty)$ be an exponent function such that $\|\id_{\N}\|_{p(\cdot)}<+\infty$ and $\sigma\in L^\infty(\R)$ sigmoidal and non-constant.  Then, $\overline{H_\sigma}=c$, where the closure is taken in the $\|\cdot\|_{p(\cdot)}$ norm. 
\end{prop}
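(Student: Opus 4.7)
The plan is to reduce everything to a uniform approximation statement on $\N$. As a preliminary observation I would show that on $\lp = \linf$ (equality coming from the hypothesis $\|\id_\N\|_{p(\cdot)}<+\infty$) the two norms are comparable. The Luxemburg definition immediately gives $\|y\|_\infty \leq \|y\|_{p(\cdot)}$, while taking $\lambda = C\,\|y\|_\infty$ with $C := \|\id_\N\|_{p(\cdot)}$ makes the modular at most $\sum_j C^{-p(j)} \leq 1$, so $\|y\|_{p(\cdot)} \leq C \|y\|_\infty$. Hence the $\|\cdot\|_{p(\cdot)}$-closure coincides with the uniform closure, and the proposition becomes $\overline{H_\sigma}^{\,\|\cdot\|_\infty} = c$.

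The inclusion $\overline{H_\sigma} \subseteq c$ is then immediate: each individual term $k\mapsto\sigma(w_j k+b_j)$ converges as $k\to\infty$ (to $c_{+\infty}$, $c_{-\infty}$, or $\sigma(b_j)$, depending on the sign of $w_j$), so $H_\sigma \subseteq c$; and $c$ is closed in $\linf$ because uniform convergence of $\{x_n\}$ forces the sequence of limits $\{\lim_k x_n(k)\}_n$ to be Cauchy, hence convergent.

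For the reverse inclusion, fix $x\in c$ with limit $L$, write $y := x - L\,\id_\N \in c_0$, and let $\varepsilon>0$. The constant $L\,\id_\N$ lies in $H_\sigma$ via the single term $(L/\sigma(b_0))\, \sigma(0\cdot k + b_0)$, where $\sigma(b_0)\neq 0$ (possible since $\sigma$ is non-constant). To approximate $y$, I would build a discrete bump $\phi_j^w$ concentrated at $k=j$: in the standard case $c_{+\infty}\neq c_{-\infty}$, set
\[
\phi_j^w(k) := \frac{\sigma(w(k-j+1/2)) - \sigma(w(k-j-1/2))}{c_{+\infty}-c_{-\infty}},
\]
and verify that $\sup_{k\in\N}|\phi_j^w(k)-\delta_{j,k}|\to 0$ as $w\to+\infty$, using that for $k\neq j$ both arguments have modulus at least $w/2$ and $\sigma$ lies within any prescribed tolerance of its asymptotic limits outside a sufficiently large compact set. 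Next, choose $N$ with $|y(k)|<\varepsilon/(3C)$ for every $k>N$, and for $w$ large enough set $h:=\sum_{j=1}^N y(j)\,\phi_j^w$, so that $\|h - y\,\id_{\{1,\dots,N\}}\|_\infty<\varepsilon/(3C)$. Defining $g := h + L\,\id_\N \in H_\sigma$ and splitting the supremum in $\|g-x\|_\infty$ between $k\leq N$ and $k>N$ yields $\|g-x\|_\infty < \varepsilon/C$, and therefore $\|g-x\|_{p(\cdot)}<\varepsilon$.

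The hardest step is the uniform-in-$k$ control of the bumps on the infinite discrete domain $\N$: pointwise convergence $\phi_j^w(k)\to\delta_{j,k}$ as $w\to\infty$ is routine, but one must simultaneously rule out a bad tail contribution for arbitrarily large $k$, and this is exactly where the sigmoidal assumption (the existence of finite asymptotic values of $\sigma$) is essential. A minor side issue is the degenerate case $c_{+\infty}=c_{-\infty}=c_\infty$, where the difference-of-sigmoids construction collapses; there one can replace $\phi_j^w$ by a single spike of the form $(\sigma(w(k-j)+b_0)-c_\infty)/(\sigma(b_0)-c_\infty)$, using the non-constancy of $\sigma$ to pick $b_0$ with $\sigma(b_0)\neq c_\infty$ and remembering that the constant $c_\infty$ itself is reproducible inside $H_\sigma$ through a $w=0$ term.
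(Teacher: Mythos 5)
Your proof is correct and follows essentially the same route as the paper: the norm equivalence of $\|\cdot\|_{p(\cdot)}$ with $\|\cdot\|_\infty$ under the hypothesis $\|\id_\N\|_{p(\cdot)}<+\infty$, the inclusion $\overline{H_\sigma}\subseteq c$ via closedness of $c$ in $\ell^\infty$, and the reverse inclusion by producing the constant part exactly with a $w=0$ term and approximating the $c_0$ part through finitely supported sequences. The only difference is that you carry out explicitly (including the uniform-in-$k$ control of the discrete bumps and the degenerate case $c_{+\infty}=c_{-\infty}$) the step the paper dispatches by reference to ``an analogous discrete argument'' to Proposition \ref{prop:DiscriminatoryC0(R)}.
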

\begin{proof}
Note that using the hypothesis of the exponent function, $(\ell^{p(\cdot)}, \|\cdot\|_{p(\cdot)})$ and $(\ell^{\infty}, \|\cdot\|_{\infty})$ is the same space with two equivalent norms. 

On the one hand, since $c$ is closed in $(\ell^{\infty}, \|\cdot\|_{\infty})$, from the obvious inclusion $H_\sigma\subseteq c$ we get that $\overline{H_\sigma}\subseteq c$.

On the other hand, to prove the reverse inclusion it is enough to show that the space of sequences with finite number of nonvanishing terms $c_{00}$ is contained in $\overline{H_\sigma}$, since the closure in $(\ell^{\infty}, \|\cdot\|_{\infty})$ of $c_{00}$ is $c_0$ (the space of sequences which converges to $0$) and we have good approximations of constants in $H_\sigma$ because $\sigma$ is sigmoidal.

Finally, $c_{00}$ can be approximated by a an analogous discrete argument as Proposition \ref{prop:DiscriminatoryC0(R)}.
\end{proof}

\begin{remark}
The statement of Proposition \ref{prop:ApproxVariableSequenceSp} holds for $\sigma$  the rectifier. More precisely, we can prove that $\overline{H_\sigma \cap \ell^{p(\cdot)}}=c$, since using two ReLU we can obtain a non-constant, sigmoidal, bounded activation function and appeal then to Proposition \ref{prop:ApproxVariableSequenceSp}.
\end{remark}

\subsection{Case III: Unbounded exponent function, general case}\label{subsec:unboundedgeneral}

Now we analyze the general situation in which the exponent function is unbounded. In this case, the variable Lebesgue space is nonseparable (see Proposition \ref{prop:LpVariableSeparable}). Using Lemma \ref{lem:SeparabilityNecces}, we deduce that the  separability of the function space is necessary for a Universal Approximation result to hold for the sigmoidal, continuous activation function. Therefore, in the current context, the difficulty to obtain approximation results is much higher. For example, note that we cannot even approximate the function by its restriction to compact domains (Proposition \ref{prop:LpVariableDenseCompact}). 

Moreover, there is an additional problem associated to finding a precise characterization of the dual, since many subtleties appear in this setting. For more information about the dual in this case, we refer the interested reader to some previous work of one of the authors \cite{amenta2019variablelp}.

For the reasons aforementioned, here we just focus in the unidimensional case, having on mind the toy model $\Omega=[1,+\infty)$ and an exponent function $p: \Omega \rightarrow \Omega$ given by $p(x)=x$ or $p(x)=[x]$, where $[x]$ denotes the integer part of $x$. In this case, we show a characterization of the subspace of functions which actually can be approximated using an artificial neural network. 

First, since we are focusing in the unidimensional case, we start by showing that we can approximate bounded functions with limit at $\infty$.

\begin{prop}\label{prop:ApproxBoundedLimit}
Let $\Omega\subseteq \R$ be an unbounded interval and $p: \Omega \rightarrow [1, + \infty)$ an unbounded exponent function such that $L^\infty(\Omega)\subset \Lp(\Omega)$ and it is bounded in every compact subset of $\Omega$. Let $\sigma\in L^\infty(\R)$ be a non-constant, sigmoidal activation function. Then, given $f\in L^\infty(\mathbb{R})$ with limit at $\infty$, and $\varepsilon>0$, there is a function of the form
$$
g_\varepsilon(x):=\sum_{j=1}^n \alpha_j \,\sigma(w_j\cdot x+b_j)
$$
such that $\|f-g_\varepsilon\|_{\Lp(\Omega)}<\varepsilon$.
\end{prop}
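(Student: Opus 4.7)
Set $\beta := \lim_{x \to +\infty} f(x)$ and decompose $f = \beta \, \id_\Omega + (f - \beta \, \id_\Omega)$; I shall build $g_\varepsilon$ as a sum $g_1 + g_2$ of two pure neural networks, one approximating each piece. The key auxiliary ingredient is that the hypothesis $L^\infty(\Omega) \subset \Lp(\Omega)$ places $\id_\Omega$ in $\Lp(\Omega)$, and the Luxemburg norm then directly yields $\|h\|_{\Lp(\Omega)} \leq \|h\|_\infty \, \|\id_\Omega\|_{\Lp(\Omega)}$ for every $h \in L^\infty(\Omega)$; so every supremum-norm bound transfers to $\Lp(\Omega)$, and this will be the only mechanism by which I control behaviour at infinity.

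\textbf{Approximating the constant.} Denote by $c_{\pm \infty}$ the asymptotic values of $\sigma$; non-constancy together with the sigmoidal hypothesis give $c_{+\infty} \neq c_{-\infty}$, so at least one of them is nonzero and I may WLOG assume $c_{+\infty} \neq 0$ (otherwise reflect $x \mapsto -x$). Writing $\Omega = [a, +\infty)$ for concreteness, the uniform bound $x + T \geq a + T$ combined with $\sigma(t) \to c_{+\infty}$ gives a $T > 0$ so large that $|\sigma(x+T) - c_{+\infty}| < \delta$ uniformly in $x \in \Omega$; setting $g_1(x) := (\beta / c_{+\infty}) \, \sigma(x + T)$ therefore yields $\|g_1 - \beta \, \id_\Omega\|_\infty \leq (|\beta|/|c_{+\infty}|)\delta$, which is $\Lp$-small by the embedding.

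\textbf{Approximating the vanishing remainder by sigmoidal bumps.} Set $h := f - \beta \, \id_\Omega$. The limit-at-infinity hypothesis furnishes $N > a$ with $\|h \, \id_{[N,+\infty)}\|_\infty < \delta$, hence the $[N,+\infty)$-tail is $\Lp$-small by the embedding. On the compact $K := [a,N]$ the exponent is bounded, so $\Lp(K)$ is a standard bounded-exponent variable Lebesgue space and Proposition~\ref{prop:LpVariableDenseCompact} together with regularity of Lebesgue measure delivers a step simple function $s' = \sum_{k=1}^{M_0} a_k \, \id_{(c_k, c_{k+1})}$ supported in $K$ with $\|s' - h \, \id_K\|_{\Lp(K)} < \delta$. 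I then replace each summand by the sigmoidal bump
\[
\rho_w^{(k)}(x) := \frac{a_k}{c_{+\infty} - c_{-\infty}} \bigl[\sigma(w(x - c_k)) - \sigma(w(x - c_{k+1}))\bigr] \, ,
\]
and define $g_2 := \sum_{k=1}^{M_0} \rho_w^{(k)}$, which is a pure sum of $2 M_0$ sigmoids whose $k$-th summand tends pointwise, as $w \to +\infty$, to $a_k \, \id_{(c_k, c_{k+1})}$.

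\textbf{Main obstacle.} The delicate step is to upgrade this pointwise convergence to $g_2 \to s'$ in $\Lp(\Omega)$. For each $k$ and $\eta > 0$, the sigmoidal property supplies a threshold $w_0(\eta)$ past which $|\rho_w^{(k)} - a_k \, \id_{(c_k, c_{k+1})}| < \delta$ uniformly on the complement of the $\eta$-neighborhood $E_\eta^{(k)} := (c_k - \eta, c_k + \eta) \cup (c_{k+1} - \eta, c_{k+1} + \eta)$; there the embedding converts the uniform bound into an $\Lp(\Omega)$ bound. Inside $E_\eta^{(k)}$ the error remains dominated by the $w$-independent constant $C_k := |a_k|(2\|\sigma\|_\infty + |c_{+\infty} - c_{-\infty}|)/|c_{+\infty} - c_{-\infty}|$, so its $\Lp(\Omega)$-contribution is at most $C_k \, \|\id_{E_\eta^{(k)}}\|_{\Lp}$, and since $p$ is bounded by some $p_+$ on the compact containing $E_\eta^{(k)}$ a direct modular estimate yields $\|\id_{E_\eta^{(k)}}\|_{\Lp} \lesssim (4\eta)^{1/p_+} \to 0$ as $\eta \to 0$. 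Choosing $\eta$ small first and then $w$ large (only finitely many bumps) produces $\|g_2 - s'\|_{\Lp(\Omega)} < \delta$. Finally, $g_\varepsilon := g_1 + g_2$ is a pure neural network, and summing the four small errors by the triangle inequality gives $\|f - g_\varepsilon\|_{\Lp(\Omega)} < \varepsilon$ for $\delta$ chosen small enough at the outset.
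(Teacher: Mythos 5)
Your overall strategy is the same as the paper's: split $\Omega$ at a point beyond which $f$ is uniformly close to its limit, approximate on the remaining compact where $p$ is bounded, and control everything near infinity through the embedding $\|h\|_{\Lp(\Omega)}\le\|h\|_{\infty}\,\|\id_\Omega\|_{\Lp(\Omega)}$, which is exactly how the paper uses $w(\Omega)<+\infty$ and the choice $\delta=\varepsilon/(4\|\id_\Omega\|_{\Lp(\Omega)})$. Where you genuinely differ is in the execution of the compact step: the paper invokes Theorem \ref{thm:UALpVariable} together with Proposition \ref{prop:DiscriminatoryC0(R)} to produce an approximant that is simultaneously $\Lp$-close on $[1,M]$ and uniformly $\delta$-small beyond $M$, whereas you rebuild that approximant by hand from interval step functions and explicit sigmoidal bumps $\sigma(w(x-c_k))-\sigma(w(x-c_{k+1}))$, treating the transition zones $E^{(k)}_\eta$ by a modular estimate and then letting $\eta\to0$ before $w\to\infty$. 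Your route is longer but has the virtue of making explicit the point the paper passes over quickly, namely that the network built on the compact can be chosen with a controlled tail; the $\eta$-then-$w$ argument and the bound $\|\id_{E^{(k)}_\eta}\|_{\Lp}\le(4\eta)^{1/p_+}$ are correct. (Minor point: the density of interval step functions in $\Lp(K)$ for bounded exponents is the standard simple-function density plus regularity of Lebesgue measure, not really Proposition \ref{prop:LpVariableDenseCompact}, which concerns density of $\Lp_c$ in $\Lp(\Omega)$.)

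One step is wrong as stated: non-constancy of a sigmoidal $\sigma$ does \emph{not} imply $c_{+\infty}\neq c_{-\infty}$. For instance $\sigma(x)=e^{-x^2}$ is bounded, non-constant, and has both limits equal to $0$, hence is sigmoidal in the sense of Definition \ref{def:sigmoidal}. Your bump construction divides by $c_{+\infty}-c_{-\infty}$ and your constant approximant by $c_{+\infty}$, so both collapse in that degenerate case. The paper's own proof makes the same tacit assumption when it subtracts a single neuron to reduce to $\beta=0$, so this is not a defect peculiar to your argument, but you should either add the hypothesis $c_{+\infty}\neq c_{-\infty}$ (satisfied by every example in the paper) or handle the degenerate case separately, e.g.\ by noting that constants are realized exactly by zero-weight neurons $\alpha\,\sigma(0\cdot x+b)$ and that the compact part can then be delegated to the discriminatory property. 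Apart from this, your proof is correct.
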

\begin{proof}
Without loss of generality we can assume that $\Omega=[1,+\infty)$.
Let $\ds \beta:=\lim_{x\to +\infty}f(x)$. For simplicity, we take $\beta=0$. Indeed, since $\sigma$ is sigmoidal, there is function $h(x)=\alpha \, \sigma(w \cdot x+b)$ such that $f-h$ is a bounded function with limit $0$ at $+\infty$, for suitable $\alpha$ and $w$.

Fix $\varepsilon > 0$. Given $\delta>0$ there is $L>0$ such that $|f(x)|<\delta$ if $x>L$. Since $L^\infty(\Omega)\subset\Lp$, we know that $w(\Omega)<+\infty$. Hence, we can take 
$$\delta=\frac{\varepsilon}{4 \|\id_\Omega\|_{\Lp(\Omega)}} \, .$$

Now, since $p$ is bounded in $[1,M]$ for some $M>L$, we can use Theorem \ref{thm:UALpVariable}  and Proposition \ref{prop:DiscriminatoryC0(R)} to find $g_\varepsilon$ of the form
$$
g_\varepsilon(x):=\sum_{j=1}^n \alpha_j \sigma(w_j\cdot x+b_j) \, ,
$$
such that the following holds
$$\|f-g_\varepsilon\|_{\Lp([1,M])}<\frac{\varepsilon}{2} \, ,$$
and $|g_\varepsilon(x)|<\delta$ for $x>M$. Then,
$$
\|f-g_\varepsilon\|_{\Lp(\Omega)}\leq \|f-g_\varepsilon\|_{\Lp([1,M])}+ \|f-g_\varepsilon\|_{\Lp([M,+\infty))}<\frac{\varepsilon}{2}+ 2\delta \|\id_\Omega\|_{\Lp(\Omega)}=\varepsilon
$$ 
\end{proof}

Next, we can proceed to the main result of this paper. In the theorem below, we provide a  characterization of the set of functions of a variable Lebesgue space with an unbounded exponent that can be approximated using neural networks.

\begin{theorem}\label{thm:UApnotbounded}
Let $\Omega\subseteq \R$ be an unbounded interval and $p: \Omega \rightarrow [1, + \infty) $ be an unbounded exponent function such that $L^\infty(\Omega)\subset \Lp(\Omega)$ and it is bounded in every compact subset of $\Omega$. Let $\sigma\in L^\infty(\R)$ be a non-constant, sigmoidal activation function. Then, the following conditions are equivalent for $f\in \Lp(\Omega)$:
\begin{enumerate}
\item For every $ \varepsilon>0$, there is a function of the form
$$
g_\varepsilon(x):=\sum_{j=1}^n \alpha_j \sigma(w_j\cdot x+b_j) \, ,
$$
such that $\|f-g_\varepsilon\|_{\Lp(\Omega)}<\varepsilon$.
\item There is a scalar $\beta \in \R$ such that
$$
\|[f-\beta \id_{\Omega}]\|_Q=0 \, ,
$$
where $\|\cdot\|_Q$ is the quotient norm given in Definition \ref{def:LpQ}.
\end{enumerate}
\end{theorem}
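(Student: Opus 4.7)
The plan is to verify the two implications separately, in each case exploiting the fact that every element $g\in H_\sigma$ with sigmoidal activation admits a well-defined finite asymptotic value at infinity.

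\textbf{Direction $(1)\Rightarrow(2)$.} Given a sequence $(g_n)\subset H_\sigma$ with $\|f-g_n\|_{p(\cdot)}\to 0$, I would set $\beta_n:=\lim_{x\to+\infty} g_n(x)$; this limit exists and is finite because $\sigma$ is sigmoidal and each $g_n$ is a finite linear combination of translates of $\sigma$. The key auxiliary claim is $[g_n]=\beta_n[\id_\Omega]$ in $\Lp_Q$. To establish it, put $\phi_n:=g_n-\beta_n\,\id_\Omega$, which is bounded on $\Omega$ and satisfies $\phi_n(x)\to 0$ at $+\infty$. For any $\delta>0$, choose $M$ with $|\phi_n(x)|<\delta$ for $x>M$, and split $\phi_n=\phi_n\,\id_{\Omega\cap[-M,M]}+\phi_n\,\id_{\Omega\setminus[-M,M]}$: the first summand belongs to $\Lp_c$ because $p$ is bounded on $\Omega\cap[-M,M]$ and $\phi_n$ is bounded, while the second satisfies $\|\phi_n\,\id_{\Omega\setminus[-M,M]}\|_{p(\cdot)}\leq\delta\,\|\id_\Omega\|_{p(\cdot)}$, which is finite since $L^\infty(\Omega)\subset\Lp(\Omega)$. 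Letting $\delta\to 0$ shows $\phi_n\in\overline{\Lp_c}$, proving the claim. The triangle inequality then gives $\|[f]-\beta_n[\id_\Omega]\|_Q\leq\|f-g_n\|_{p(\cdot)}\to 0$. Since $\|[\id_\Omega]\|_Q=w(\Omega)\geq 1>0$ (because for $\lambda\in(0,1)$ one has $(1/\lambda)^{p(x)}\geq 1/\lambda>1$ pointwise, forcing the integral in Proposition \ref{prop:charNormQ} to diverge on the infinite-measure set $\Omega$), the scalars $\{\beta_n\}$ are Cauchy in $\R$ and converge to some $\beta$, yielding $\|[f-\beta\,\id_\Omega]\|_Q=0$.

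\textbf{Direction $(2)\Rightarrow(1)$.} I would split $f=(f-\beta\,\id_\Omega)+\beta\,\id_\Omega$ and approximate each summand separately in $H_\sigma$. Fix $\varepsilon>0$. The hypothesis $\|[f-\beta\,\id_\Omega]\|_Q=0$ produces $h\in\Lp_c$ with $\|f-\beta\,\id_\Omega-h\|_{p(\cdot)}<\varepsilon/4$, and since $h$ is supported on some compact $K\subset\Omega$ on which $p$ is bounded, Proposition \ref{prop:LpVariableDenseCompact} applied on $K$ yields $\tilde h\in C^\infty_c(K)$ with $\|h-\tilde h\|_{p(\cdot)}<\varepsilon/4$. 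Now $\tilde h\in L^\infty(\R)$ has zero limit at infinity, so Proposition \ref{prop:ApproxBoundedLimit} produces $g_h\in H_\sigma$ with $\|\tilde h-g_h\|_{p(\cdot)}<\varepsilon/4$; similarly $\beta\,\id_\Omega$ is bounded with constant limit $\beta$ at infinity, so a second application of Proposition \ref{prop:ApproxBoundedLimit} produces $g_c\in H_\sigma$ with $\|\beta\,\id_\Omega-g_c\|_{p(\cdot)}<\varepsilon/4$. Then $g_\varepsilon:=g_h+g_c\in H_\sigma$, and the triangle inequality gives $\|f-g_\varepsilon\|_{p(\cdot)}<\varepsilon$.

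The main technical obstacle is the auxiliary claim $[g_n]=\beta_n[\id_\Omega]$ in the first direction, which jointly invokes sigmoidality (to obtain a single asymptotic value) and the hypothesis $\|\id_\Omega\|_{p(\cdot)}<\infty$ (to convert an $L^\infty$ tail bound into an $L^{p(\cdot)}$ tail bound). A secondary but essential point is the positivity $\|[\id_\Omega]\|_Q>0$, without which the Cauchy argument on $\{\beta_n\}$ would not determine a unique $\beta$; this is exactly where the combination $p\geq 1$ and $|\Omega|=\infty$ enters. Finally, when $\Omega$ is unbounded on both sides (for instance $\Omega=\R$), each $g\in H_\sigma$ has two potentially distinct asymptotic values at $\pm\infty$ and the above analysis must be run along each end separately; the statement reads most cleanly when $\Omega$ is a half-line, consistent with the normalisation used in the proof of Proposition \ref{prop:ApproxBoundedLimit}.
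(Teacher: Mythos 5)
Your proof is correct and follows the same overall strategy as the paper's, so I will only record where the two diverge. In $(1)\Rightarrow(2)$ the arguments are essentially identical: extract the asymptotic value $\beta_n$ of each sigmoidal network, show $[g_n]=\beta_n[\id_\Omega]$ in the quotient by splitting off a compactly supported piece and a $\delta$-small tail, and run a Cauchy argument on the $\beta_n$; your explicit verification that $w(\Omega)\geq 1>0$ (from $p\geq 1$ and $|\Omega|=+\infty$) is a small but genuine improvement, since the paper divides by $w(\Omega)$ without comment, and you also observe directly that the $\beta_n$ are Cauchy rather than passing through boundedness and a subsequence. In $(2)\Rightarrow(1)$ your decomposition is mildly different: the paper approximates the compactly supported piece $f_c$ on $[1,L]$ via Theorem \ref{thm:UALpVariable} and must then carry the leakage term $g_1\id_{(L,+\infty)}$ into the second approximation step, whereas you pre-smooth $f_c$ to a bounded $\tilde h\in C^\infty_c$ and apply Proposition \ref{prop:ApproxBoundedLimit} twice, once to $\tilde h$ and once to $\beta\id_\Omega$; this removes the tail bookkeeping at the cost of one extra $\varepsilon/4$ and an appeal to density of smooth functions on the compact, and both routes are sound. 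Finally, your closing caveat about $\Omega$ unbounded on both sides is well taken and in fact sharper than the paper's treatment: the reduction ``without loss of generality $\Omega=[1,+\infty)$'' is not innocuous, because for $\Omega=\R$ a network in $H_\sigma$ has two asymptotic values and condition $(2)$ with a single $\beta$ must be replaced by one scalar per end whenever both ends carry positive $w$-weight.
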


\begin{remark}
The rectifier in Example \ref{ex:activationfunctions} is also a valid activation function for Theorem \ref{thm:UApnotbounded}, since we can obtain a continuous, bounded activation function as a combination of two ReLU. 
\end{remark}
\begin{proof}[Proof of Theorem \ref{thm:UApnotbounded}]
Without loss of generality we can assume that $\Omega=[1,+\infty)$. First we show that \underline{$1\Rightarrow 2$}. We recall that, since $\sigma$ is sigmoidal (see Definition \ref{def:sigmoidal}), then 
\begin{equation*}
\sigma (t)=\begin{cases} 
       c_{+\infty}  & \text{as } t \rightarrow + \infty \, , \\
       c_{-\infty}  & \text{as }  t \rightarrow - \infty \, .
   \end{cases}
\end{equation*}

Therefore, every function of the form
$$
g(x)=\sum_{j=1}^n \alpha_j \sigma(w_j\cdot x+b_j)
$$
converges to 
$$\ds \sum_{\{j: w_j>0\}} \alpha_j c_{+\infty}+\sum_{\{j: w_j<0\}} \alpha_j c_{-\infty}+ \sum_{\{j: w_j=0\}} \alpha_j \sigma(b_j)$$
when $x$ tends to $+\infty$.

From hypothesis $1$, we know that $\forall \varepsilon>0$, there is a function $g_\varepsilon$ with the previous form such that $\|f-g_\varepsilon\|_{\Lp(\Omega)}<\varepsilon$. Let us denote by $\beta_\varepsilon$ the limit at $+ \infty$ of $g_\varepsilon$. First, we can show that $g_\varepsilon$ and $\beta_\varepsilon \id_\Omega$ belong to the same class in the quotient space.

Indeed, since $\beta_\varepsilon$ is the limit of $g_\varepsilon(x)$ at $+\infty$, for every $\delta>0$ we can find $M>0$ such that for every $x>M$, it holds that $|g_\varepsilon(x)-\beta_\varepsilon|<\delta.$ Then,
$$
\|[g_\varepsilon-\beta_\varepsilon \id_\Omega]\|_Q=\|[(g_\varepsilon-\beta_\varepsilon) \id_{(M,+\infty)}]\|_Q\leq \|[\delta\id_{(M,+\infty)}]\|_Q=\delta w(\Omega),
$$
where the first equality follows from the fact that the quotient space is taken over the closure of the compactly supported functions in $\Lp$. Note that $w(\Omega)$ is finite. Then, as $\delta>0$ can be arbitrary small, we conclude $\|[g_\varepsilon-\beta_\varepsilon \id_\Omega]\|_Q=0$. 

Since these two functions are equivalent in the quotient space, we can deduce the following:
$$
\|[f-\beta_{\varepsilon}\id_\Omega]\|_Q=\|[f-g_{\varepsilon}]\|_Q\leq \|f-g_{\varepsilon}\|_{\Lp(\Omega)}<\varepsilon \, .
$$

Next, we claim that the sequence $\{\beta_\varepsilon\}$ is bounded. Indeed, this holds due to the fact that:
$$
\left|\beta_{\varepsilon_1}-\beta_{\varepsilon_2}\right| w(\Omega)=\|[(\beta_{\varepsilon_1}-\beta_{\varepsilon_2})\id_\Omega]\|_Q\leq \|[\beta_{\varepsilon_1}\id_\Omega-f]\|_Q+\|[f-\beta_{\varepsilon_2}\id_\Omega]\|_Q<\varepsilon_1+\varepsilon_2 \, ,
$$
and thus,
$$
\left|\beta_{\varepsilon_1}-\beta_{\varepsilon_2}\right| \leq \frac{\varepsilon_1+\varepsilon_2}{w(\Omega)} \, .
$$

Since $\{\beta_\varepsilon\}$ is bounded, there is subsequence $\{\beta_{\varepsilon_n}\}$, with $\varepsilon_n$ tending to $0$ and $\beta_{\varepsilon_n}$ converging to some scalar $\beta$ when $n$ tends to $+\infty$. To conclude, we now have to show that 
$$
\|[f-\beta \id_{\Omega}]\|_Q=0 \, ,
$$
finishing thus the proof of $2$. Indeed:
$$
\|[f-\beta \id_{\Omega}]\|_Q\leq \|[f-\beta_{\varepsilon_n} \id_{\Omega}]\|_Q+\|[(\beta_{\varepsilon_n}-\beta) \id_{\Omega}]\|_Q\leq \varepsilon_n+|\beta_{\varepsilon_n}-\beta|w(\Omega) \, .
$$
which clearly vanishes when $n$ tends to $+\infty$.

Now we prove \underline{$2\Rightarrow 1$}. Fix $\varepsilon>0$. From hypothesis $2$, we can find $f_c \in \Lp_c$ such that
$$
\|(f-\beta \id_{\Omega})-f_c\|_{\Lp(\Omega)} <\frac{\varepsilon}{3} \, .
$$

The support of $f_c$ is contained in a compact of the form $[1,L]$ for some $L$. Since $p$ is bounded at $[1,L]$ by hypothesis,  using Theorem \ref{thm:UALpVariable} we can find a function $g_1$ of the form 
$$
g_1=\sum_{j=1}^n \alpha_j \sigma(w_j\cdot x+b_j) \, ,
$$
such that
$$
\|f_c -g_1\|_{\Lp([1,L])} <\frac{\varepsilon}{3} \, .
$$

Since $\beta \id_{\Omega}-g_1 \id_{(l,+\infty)}$ is clearly a bounded function whose limit exits (and is finite) at $+\infty$, we can appeal to Proposition \ref{prop:ApproxBoundedLimit} to find a function 
function $g_2$ of the form 
$$
g_2=\sum_{j=n+1}^{n+m} \alpha_j \sigma(w_j\cdot x+b_j) \, ,
$$
such that
$$
\|(\beta \id_{\Omega}-g_1 \id_{(l,+\infty)}) -g_2\|_{\Lp(\Omega)} <\frac{\varepsilon}{3}\, .
$$

Then, we can take $g_\varepsilon$ in the statement of the theorem to be:
$$
g_\varepsilon(x):=g_1(x)+g_2(x)=\sum_{j=1}^{n+m} \alpha_j \sigma(w_j\cdot x+b_j) \, ,
$$
since
\begin{align*}
\|f-g_\varepsilon\|_{\Lp(\Omega)} &\leq \|(f-\beta \id_{\Omega})-f_c\|_{\Lp(\Omega)}  + \|f_c -g_1\|_{\Lp([1,L])} \\
& \; \; \; \; +\|(\beta \id_{\Omega}-g_1 \id_{(L,+\infty)}) -g_2\|_{\Lp(\Omega)}\\
&<\frac{\varepsilon}{3}+\frac{\varepsilon}{3}+\frac{\varepsilon}{3}=\varepsilon \, .
\end{align*}

\end{proof}

\vspace{0.2cm}

\noindent {\it Acknowledgments.} 
The authors want to thank Wen-Liang Hwang for his valuable comments on an earlier draft and JO would like to thank both him and the Institute of Information Science in Taipei for the hospitality during his research stay in the summer of 2018, when the idea for this project was born. AC acknowledges funding by the Deutsche Forschungsgemeinschaft (DFG, German Research Foundation) under Germanys Excellence Strategy EXC-2111 390814868. JO is partially supported by the grant MTM2017-83496-P from the Spanish Ministry of Economy and Competitiveness and through the “Severo Ochoa Programme for Centres of Excellence in R\&D” (SEV-2015-0554). This project has received funding from the European Union’s Horizon 2020 research and innovation programme under the Marie Skłodowska-Curie grant agreement No 777822.

\bibliographystyle{alpha}
\bibliography{biblio}

\newcommand{\etalchar}[1]{$^{#1}$}
\begin{thebibliography}{ALdTRLV18}

\bibitem[ACACUO19]{amenta2019variablelp}
A.~Amenta, J.~M. Conde-Alonso, D.~Cruz-Uribe, and J.~Ocáriz.
\newblock On the dual of variable lebesgue spaces with unbounded exponent.
\newblock {\em preprint, arXiv:1909.05987}, 2019.

\bibitem[ALdTRLV18]{almira2018negative}
J.~Almira, P.~López-de Teruel, D.~Romero-López, and F.~Voigtlaender.
\newblock Some negative results for single layer and multilayer feedforward
  neural networks.
\newblock {\em preprint, arXiv:1810.10032}, 2018.

\bibitem[AN20]{ali2020tensornetworks}
M.~Ali and A.~Nouy.
\newblock Approximation with tensor networks. part i: Approximation spaces.
\newblock {\em preprint, arXiv:2007.00118}, 2020.

\bibitem[Arn57]{arnold1957threevariables}
V.~I. Arnold.
\newblock On functions of three variables.
\newblock {\em Doklady Akademii Nauk USSR}, 114:679--681, 1957.

\bibitem[Bar94]{barron1994approx}
A.~Barron.
\newblock Approximation and estimation bounds for artificial neural networks.
\newblock {\em Mach. Learn.}, 14(1):115--133, 1994.

\bibitem[BGKP19]{bolcskei2019sparselydeep}
H.~Bölcskei, P.~Grohs, G.~Kutyniok, and P.~Petersen.
\newblock Optimal approximation with sparsely connected deep neural networks.
\newblock {\em SIAM J. Math. Data Sci.}, 1(1):8--45, 2019.

\bibitem[Bis06]{bishop2006patternrecognition}
C.~Bishop.
\newblock {\em Pattern Recognition and Machine Learning}.
\newblock Springer New York, 2006.

\bibitem[CUF13]{cruzuribe2013variablelp}
D.~V. Cruz-Uribe and A.~Fiorenza.
\newblock {\em Variable Lebesgue Spaces: Foundations and Harmonic Analysis}.
\newblock Springer Science \& Business Media, 2013.

\bibitem[Cyb89]{cybenko1989approximation}
G.~Cybenko.
\newblock Approximation by superpositions of a sigmoidal function.
\newblock {\em Mathematics of control, signals and systems}, 2(4):303--314,
  1989.

\bibitem[GR20]{guhring2020encodableweights}
I.~Gühring and M.~Raslan.
\newblock Approximation rates for neural networks with encodable weights in
  smoothness spaces.
\newblock {\em preprint, arXiv:2006.16822}, 2020.

\bibitem[GS09]{giulini2009optimization}
S.~Giulini and M.~Sanguineti.
\newblock Approximation schemes for functional optimization problems.
\newblock {\em J. Optim. Theory Appl.}, 140:33--54, 2009.

\bibitem[Hay98]{haykin1998neuralnetworks}
S.~Haykin.
\newblock {\em Neural Networks: A Comprehensive Foundation}.
\newblock Prentice Hall, 1998.

\bibitem[HH20]{hwang2020unrectifying}
W.~Hwang and A.~Heinecke.
\newblock Un-rectifying non-linear networks for signal representation.
\newblock {\em IEEE Transactions on Signal Processing}, 68:196--210, 2020.

\bibitem[HHH20]{heinecke2020ReLU}
A.~Heinecke, J.~Ho, and W.~Hwang.
\newblock Refinement and universal approximation via sparsely connected relu
  convolution nets.
\newblock {\em IEEE Signal Processing Letters}, 2020.

\bibitem[Hor91]{hornik1991approximation}
K.~Hornik.
\newblock Approximation capabilities of multilayer feedforward networks.
\newblock {\em Neural networks}, 4(2):251--257, 1991.

\bibitem[HSW89]{hornik1989multilayer}
K.~Hornik, M.~Stinchcombe, and H.~White.
\newblock Multilayer feedforward networks are universal approximators.
\newblock {\em Neural networks}, 2(5):359--366, 1989.

\bibitem[KL19]{kidger2019UAdeep}
P.~Kidger and T.~Lyons.
\newblock Universal approximation with deep narrow networks.
\newblock {\em preprint, arXiv:1905.08539}, 2019.

\bibitem[Kol57]{kolmogorov1957representation}
A.~N. Kolmogorov.
\newblock On the representation of continuous functions of many variables by
  superpositions of continuous functions of one variable and addition.
\newblock {\em Dokl. Akad. SSSRl}, 114:953--956, 1957.

\bibitem[Kra19]{kratsios2019approximation}
A.~Kratsios.
\newblock The universal approximation property: Characterizations, existence,
  and a canonical topology for deep-learning.
\newblock {\em preprint, arXiv:1910.03344}, 2019.

\bibitem[KvdS93]{krosesmagt1993introneuralnetworks}
B.~Kröse and P.~van~der Smagt.
\newblock An introduction to neural networks.
\newblock {\em J. Comput. Sci.}, 48, 01 1993.

\bibitem[Lin19]{lin2019deepnets}
S.-B. Lin.
\newblock Generalization and expressivity for deep nets.
\newblock {\em IEEE T. Neur. Net. Lear.}, 30(5):1392--1406, 2019.

\bibitem[LLPS93]{leshno1993multilayer}
Moshe Leshno, Vladimir~Ya Lin, Allan Pinkus, and Shimon Schocken.
\newblock Multilayer feedforward networks with a nonpolynomial activation
  function can approximate any function.
\newblock {\em Neural networks}, 6(6):861--867, 1993.

\bibitem[LTY20]{li2020highdimensions}
B.~Li, S.~Tang, and H.~Yu.
\newblock Better approximations of high dimensional smooth functions by deep
  neural networks with rectied power units.
\newblock {\em Commun. in Comp. Phys.}, 27:379--411, 2020.

\bibitem[Mha96]{mhaskar1996NNoptimalapprox}
M.~Mhaskar.
\newblock Neural networks for optimal approximation of smooth and analytic
  functions.
\newblock {\em Neural Comput.}, 8(1):164--177, 1996.

\bibitem[MP43]{mcculloch1943origin}
W.~S. McCulloch and W.~Pitts.
\newblock A logical calculus of the ideas immanent in nervous activity.
\newblock {\em The bulletin of mathematical biophysics}, 5(4):115--133, 1943.

\bibitem[OK19]{ohn2019smoothapprox}
I.~Ohn and Y.~Kim.
\newblock Smooth function approximation by deep neural networks with general
  activation functions.
\newblock {\em Entropy}, 21(7):627, 2019.

\bibitem[Pet99]{petrushev1999ridge}
P.~P. Petrushev.
\newblock Approximation by ridge function and neural networks.
\newblock {\em SIAM Journal on Mathematical Analysis}, 30:155--189, 1999.

\bibitem[Pin97]{pinkus1997ridge}
A.~Pinkus.
\newblock Approximation by ridge function.
\newblock {\em M{\'e}haut{\'e} ALe, Rabut C, Schumaker LL, editors. Surface
  Fitting and Multiresolution Methods. Vanderbilt University Press, Nashville,
  TN}, pages 1--14, 1997.

\bibitem[Pin99]{pinkus1999MLPmodel}
A.~Pinkus.
\newblock Approximation theory of the mlp model in neural networks.
\newblock {\em Acta Numer.}, 8:143--195, 1999.

\bibitem[PS91]{park1991universal}
J.~Park and I.~W. Sandberg.
\newblock Universal approximation using radial-basis-function networks.
\newblock {\em Neural computation}, 3(2):246--257, 1991.

\bibitem[PV18]{petersen2018deepReLU}
P.~Petersen and F.~Voigtländer.
\newblock Optimal approximation of piecewise smooth functions using deep relu
  neural networks.
\newblock {\em Neural Netw.}, 108:296--330, 2018.

\bibitem[Roj96]{rojas1996neuralnetworks}
R.~Rojas.
\newblock {\em Neural Networks - A Systematic Introduction}.
\newblock Springer-Verlag New York, 1996.

\bibitem[Ros58]{rosenblatt1958perceptron}
F.~Rosenblatt.
\newblock The perceptron: a probabilistic model for information storage and
  organization in the brain.
\newblock {\em Psychological review}, 65(6):386, 1958.

\bibitem[Rud91]{rudin1921analysis}
W.~Rudin.
\newblock {\em Functional analysis}.
\newblock McGraw-Hill, 1991.

\bibitem[San08]{sanguineti2008survey}
M.~Sanguineti.
\newblock Universal approximation by ridge computational models and neural
  networks: A survey.
\newblock {\em The Open Applied Mathematics Journal}, 2(1):31--58, 2008.

\bibitem[SCC18]{shaman2018deep}
U.~Shaman, A.~Cloninger, and R.~Coifman.
\newblock Provable approximation properties for deep neural networks.
\newblock {\em Appl. Comput. Harmon. Anal.}, 44(3):537--557, 2018.

\bibitem[SH17]{schmidt2017regression}
J.~Schmidt-Hieber.
\newblock Nonparametric regression using deep neural networks with relu
  activation function.
\newblock {\em preprint, arXiv:1708.06633}, 2017.

\bibitem[ST98]{scarselli1998survey}
F.~Scarselli and A.~C. Tsoi.
\newblock Universal approximation using feedforward neural networks: A survey
  of some existing methods, and some new results.
\newblock {\em Neural Networks}, 11(1):15--37, 1998.

\bibitem[Suz19]{suzuki2019besov}
T.~Suzuki.
\newblock Adaptivity of deep relu network for learning in besov and mixed
  smooth besov spaces: optimal rate and curse of dimensionality.
\newblock {\em 7th International Conference on Learning Representations, ICLR
  2019, New Orleans, LA, USA, May 6-9, 2019}, 2019.

\bibitem[TKG{\etalchar{+}}03]{tikk2003survey}
D.~Tikk, L.~T K{\'o}czy, T.~D. Gedeon, et~al.
\newblock A survey on universal approximation and its limits in soft computing
  techniques.
\newblock {\em International Journal of Approximate Reasoning}, 33(2):185--202,
  2003.

\bibitem[TLY19]{tang2019chebyshev}
S.~Tang, B.~Li, and H.~Yu.
\newblock Efficient and stable constructions of deep neural networks with
  rectied power units using chebyshev approximations.
\newblock {\em preprint, arXiv:1911.05467}, 2019.

\bibitem[Wid90]{widrow1990neuralnetworks}
B.~Widrow.
\newblock 30 years of adaptive neural networks: Perceptron, madaline, and
  backpropagation.
\newblock {\em IEEE Transactions on Neural Networks}, 78(9):1415--1442, 1990.

\bibitem[Yar17]{yarotsky2017errorRELU}
D.~Yarotsky.
\newblock Error bounds for approximations with deep relu networks.
\newblock {\em Neural Netw.}, 94:103--114, 2017.

\end{thebibliography}

\vspace{1cm}
\end{document}